\def \C {{\mathbb C}}
\def \N {{\mathbb N}}
\def \R {{\mathbb R}}
\def \Z {{\mathbb Z}}
\def \d {\,{\rm d}}
\def\re{{\Re e\,}}
\def\le{\leqslant}
\def\ge{\geqslant}
\def\geq{\geqslant}
\def \fa{{\mathfrak{a}}}
\def \ff{{\mathfrak{f}}}
\DeclareMathOperator{\ee}{e}
\newcommand*{\dd}{%
  \mathop{\mathrm{d}\null}\mskip-\thinmuskip\mathord{\null}}
\newcommand*{\ic}{\mathrm{i}}
\newcommand*{\f}{\mathfrak{f}}
\newcommand*{\g}{\mathfrak{g}}
\newcommand*{\fh}{\mathfrak{h}}
\newcommand{\vep}{\varepsilon}
\newcommand*{\cush}{\mathfrak{S}_{\ell+1/2}}%
\theoremstyle{plain}
\newtheorem{theorem}{Theorem}
\newtheorem{lemma}{Lemma}[section]
\newtheorem{proposition}{Proposition}
\theoremstyle{remark}
\newtheorem{remark}{Remark}
\theoremstyle{definition}
\numberwithin{equation}{section}
\newcommand*{\pk}{\mathscr{H}}
\newcommand{\bpm}{\begin{pmatrix}}
\newcommand{\epm}{\end{pmatrix}}
\newcommand{\bsm}{\lt(\begin{smallmatrix}}
\newcommand{\esm}{\end{smallmatrix}\rt)}
\newcommand{\lt}{\left}
\newcommand{\rt}{\right}
\begin{document}

\vglue 0mm

\title[Fourier coefficients of modular forms]
{On Fourier coefficients of modular forms of half integral weight at squarefree integers}
\author{Y.-J. Jiang,  Y.-K. Lau, G.-S. L\"u, E. Royer \& J. Wu}

\address{
Yujiao Jiang
\\
Department of Mathematics
\\
Shandong University
\\
Jinan, Shandong 250100
\\
China}
\email{yujiaoj@hotmail.com}

\address{
Yuk-Kam Lau
\\
Department of Mathematics
\\
The University of Hong Kong
\\
Pokfulam Road
\\
Hong Kong}
\email{yklau@maths.hku.hk}

\address{
Guangshi L\"u
\\
Department of Mathematics
\\
Shandong University
\\
Jinan, Shandong 250100
\\
China}
\email{gslv@sdu.edu.cn}

\address{%
Emmanuel Royer\\
Clermont Universit\'e\\
Universit\'e Blaise Pascal\\
Laboratoire de math\'ematiques\\
BP 10448\\
F-63000 Clermont-Ferrand\\
France %
}
\curraddr{%
Emmanuel Royer\\
Universit\'e Blaise Pascal\\
Laboratoire de math\'ematiques\\
Les C\'ezeaux\\
BP 80026\\
F-63171 Aubi\`ere Cedex\\
France %
}
\email{{emmanuel.royer@math.univ-bpclermont.fr}}

\address{%
Jie Wu\\
CNRS\\
Institut \'Elie Cartan de Lorraine\\
UMR 7502\\
F-54506 Van\-d\oe uvre-l\`es-Nancy\\
France}
\curraddr{%
Université de Lorraine\\
Institut \'Elie Cartan de Lorraine\\
UMR 7502\\
F-54506 Van\-d\oe uvre-l\`es-Nancy\\
France
}
\email{jie.wu@univ-lorraine.fr}

\date{\today}

\begin{abstract}
We show that the Dirichlet series associated to the Fourier coefficients of a half-integral weight Hecke eigenform at squarefree integers extends analytically to a holomorphic function in the half-plane $\re s>\tfrac{1}{2}$. This exhibits a high fluctuation of the coefficients at squarefree integers.
\end{abstract}

\subjclass[2000]{11F30}
\keywords{Fourier coefficients of modular forms, Dirichlet series}
\maketitle

\addtocounter{footnote}{1}

\section{Introduction}

Some modular forms are endowed with nice arithmetic properties, for which techniques in  analytic number theory can be applied to unveil their  extraordinary features. For instance, Matom\"aki and Radziwill \cite{MR} made an important progress for multiplicative functions with an application (amongst many) to give a very sharp result on the holomorphic Hecke cusp eigenforms of integral weight. The Hecke eigenforms of half-integral weight is substantially different from the case of integral weight. A simple illustration is the multiplicativity of their Fourier coefficients. If $f$ is a Hecke eigenform of integral weight (for $SL_2(\Z)$),  its Fourier coefficient $a_f(m)$ will be factorized into $a_f(m)= \prod_{p^r\|m} a_f(p^{r})$. However, for a Hecke eigenform $\f$ of half-integral weight (for $\Gamma_0(4)$), we only have $a_\f(tm^2)= a_\f(t) \prod_{p^r\|m} a_\f(p^{2r})$ for any squarefree  $t$, due to Shimura. (Both $a_f(1)=a_\f(1)=1$ are assumed.)  This, on one hand,  alludes to the mystery of $\{a_\f(t)\}_{t\ge 1}^{\flat}$\footnote{\mbox{}  The superscript $\mbox{}^\flat$ is to  indicate that the index is supported at squarefree integers.} and,  on the other hand, provides an interesting object $\{a_\f(n)\}_{n\ge 1}$ whose multiplicativity (is limited to the square factors) has no analogue to the classical number-theoretic functions. 

The classical divisor function $\tau(n):=\sum_{d|n} 1$ appears to be Fourier coefficients of some Eisenstein series.  
In the literature there are investigations on $\{\tau(t)\}_{t\ge 1}^\flat$ and 
on the associated Dirichlet series $L(s) := \sum_{t\ge 1}^\flat \tau(t) t^{-s}$, which is however rather obscure. 
Using the multiplicative properties of $\tau(n)$, 
$L(s)$ is connected to the reciprocal of the Riemann zeta-function $\zeta(2s)^3$, 
and it extends analytically to (a slightly bigger region containing) the half-plane $\re s> \tfrac{1}{2}$ 
with exactly one double pole at $s=1$. 
A further extension is equivalent to a progress towards the Riemann Hypothesis. 

On the other hand,  
to study the sign-changes in $\{a_\f(t)\}_{t \ge 1}^\flat$, Hulse et al. \cite{HKKL2012} recently considered 
$L_\f^\flat(s) := \sum_{t\ge 1}^{\flat} \lambda_\f(t) t^{-s}$ (where $\lambda_\f(t)=a_\f(t)t^{-(\ell/2-1/4)}$). 
Interestingly they showed that $L_\f^\flat(s)$ extends analytically to a holomorphic function in $\re s>\tfrac{3}{4}$.
One naturally asks how far $L_\f^\flat(s)$ can further extend to. Compared with the case of $\tau(n)$ but without adequate multiplicativity,  a  continuation to the region $\re s>\tfrac{1}{2}$ is curious, non-trivial and plausibly (very close to) the best attainable with current technology. 

The argument of proof in \cite{HKKL2012} is based on the convexity principle and includes two key ingredients: 
\begin{itemize}
\item[(a)] 
the inequality $\lambda_\f(tr^2)\ll_{\vep} |\lambda_\f(t)| r^\vep$, 
\item[(b)]
the functional equations of the twisted $L$-functions for $\f$ by additive characters $\ee(un/d)$. 
\end{itemize}
The inequality (a) is a substitute for the unsettled Ramanujan Conjecture for half-integral weight Hecke eigenforms, 
and this is derived from the Shimura correspondence and the Deligne bound for modular forms of integral weight. According to various $d$'s, the functional equations of (b) involves the Fourier expansions of $\f$ at different cusps, which is detailedly computed in \cite{HKKL2012}. However, due to the multiplier system, the Fourier expansion at the cusp $\tfrac{1}{2}$ is not of period $1$, of which Hulse et al seemed not aware. We shall propose an amendment in Section~\ref{S4}. 

Our main goal  is to prove that $L_\f^\flat(s)$ extends analytically to $\re s>\tfrac{1}{2}$.  
We shall not use the convexity principle but apply the approximate functional equation with the point $s$ 
close to the line $\re s=\tfrac{1}{2}$ (from right).  
The cancellation amongst the exponential factors and real quadratic characters arising from the twisted $L$-functions are explored. Without a known Ramanujan Conjecture, 
the inequality (a) is crucial and indeed we need more -- an inequality of the same type for the Fourier coefficients at all cusps, which is done in Section~\ref{S3}. There we study the Fourier coefficients of a (complete) Hecke eigenform 
at the two cusps $0$ and $\tfrac{1}{2}$, 
and derive some inequalities and bounds useful for analytic approaches, which are  of their own interest.

\vskip 8mm

\section{Main results}

Let \(\ell\geq 2\)  be a positive integer, and denote by \(\cush\) the set of all holomorphic cusp forms of weight \(\ell+1/2\) 
for the congruence subgroup \(\Gamma_0(4)\). The Fourier expansion of \(\f\in\cush\) at \(\infty\) is written as
\begin{equation}\label{eq_newone}%
\f(z)=\sum_{n\ge 1} \lambda_{\f}(n) n^{\ell/2-1/4}\ee(nz) 
\qquad (z\in\pk),
\end{equation}
where \(\ee(z) = \ee^{2\pi\ic z}\) and \(\pk\) is the Poincar\'e upper half plane. 
Define 
\begin{equation}\label{def_Mfs}
L_\f^\flat(s) := \sideset{}{^\flat}\sum_{t\ge 1} \lambda_\f(t) t^{-s}
\end{equation}
for $s=\sigma+\text{i}\tau$ with $\sigma>1$,
where $\sum_{t\ge 1}^\flat$ ranges over squarefree integers $t\ge 1$.

\begin{theorem}\label{main}
Let $\ell\ge 2$ be a positive integer and 
${\mathfrak f}\in \cush$ be a complete Hecke eigenform.
The series $L_\f^\flat(s)$ in \eqref{def_Mfs} extends analytically to a holomorphic function on $\Re e\, s>\tfrac{1}{2}$.
Moreover, for any $\varepsilon>0$ we have
\begin{equation}\label{UBMs}
L_\f^\flat(s)
\ll_{{\mathfrak f}, \varepsilon} (|\tau|+1)^{1-\sigma+2\varepsilon}
\qquad
(\tfrac1{2}+\varepsilon\le\sigma\le 1+\vep, \tau\in \R),
\end{equation}
where the implied constant depends on ${\mathfrak f}$ and $\varepsilon$ only.
\end{theorem}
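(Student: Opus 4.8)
The plan is to write $L_\f^\flat(s)$ as an explicit weighted superposition of additively twisted $L$-functions of $\f$ and then to evaluate each twist near the line $\re s=\tfrac12$ by an approximate functional equation; the improvement over \cite{HKKL2012} will come not from the convexity principle but from genuine cancellation among the twist parameters. First I would detect the squarefree support through $\mu^2(t)=\sum_{a^2\mid t}\mu(a)$ and open the resulting divisibility condition by additive characters. Writing $L_\f(s;\beta):=\sum_{n\ge1}\lamf(n)\ee(n\beta)n^{-s}$, this yields the exact identity
\begin{equation*}
L_\f^\flat(s)=\sum_{a\ge1}\frac{\mu(a)}{a^{2}}\sum_{u\bmod a^{2}}L_\f\!\Big(s;\frac{u}{a^{2}}\Big),
\end{equation*}
valid for $\sigma>1$, which exhibits $L_\f^\flat(s)$ as a combination of exactly the twisted objects that carry functional equations in ingredient~(b).

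Next I would treat each twist $L_\f(s;u/a^{2})$ by an approximate functional equation, with the smoothing balanced so that the main and the dual sum both have length $\asymp|\tau|$ (the analytic conductor of these degree-two objects in the $\tau$-aspect being $\asymp|\tau|^{2}$), taking $s$ just to the right of $\re s=\tfrac12$. The functional equations required are those relating the expansion of $\f$ at $\infty$ to its expansions at the cusps $0$ and $\tfrac12$; here I would use the corrected form from Section~\ref{S4}, which accounts for the fact---overlooked in \cite{HKKL2012}---that the expansion at $\tfrac12$ is not of period~$1$. On the dual side the coefficients are the Fourier coefficients of $\f$ at those cusps, and, lacking the Ramanujan bound, I would control them by the all-cusp analogue of inequality~(a) established in Section~\ref{S3}, supplemented by the Rankin--Selberg second moment $\sum_{m\le Y}|\lamf(m)|^{2}\ll_\f Y$. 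For a single twist, the triangle inequality together with these bounds already reproduces the convexity-strength estimate $\ll_{\f,\vep}(|\tau|+1)^{1-\sigma+\vep}$.

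The decisive step---and the one I expect to be the main obstacle---is that one cannot afford to bound the twists individually: summing the per-twist convexity bound over $u\bmod a^{2}$ and over $a$ would inflate the exponent and merely recover (or fall short of) the range $\re s>\tfrac34$ of \cite{HKKL2012}. After the functional equation is applied, the sum over $u$ of the arithmetic factors attached to $\ee(u\,\cdot/a^{2})$, once married to the Möbius weight $\mu(a)$, collapses into Gauss sums and hence into \emph{real quadratic characters}. The whole point is to exploit the cancellation \emph{between} these exponential factors and the quadratic characters: I would detect square-root cancellation in the ensuing character sums by a P\'olya--Vinogradov/large-sieve type argument, arranged so that the saving in the $(a,u)$-summation exactly offsets its length. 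Securing this cancellation uniformly in $|\tau|$, and matching it against the Section~\ref{S3} coefficient bounds so that no power of $|\tau|$ is lost, is the crux of the argument.

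Assembling the estimates, the functional-equation range of $a$ (up to a parameter balanced against $|\tau|$) contributes $\ll_{\f,\vep}(|\tau|+1)^{1-\sigma+2\vep}$ once the cancellation above is in force, while the complementary range of large $a$ is disposed of trivially by the second-moment bound; optimizing the cutoff gives \eqref{UBMs}. Finally, since the approximate functional equation represents $L_\f^\flat(s)$ at every point of the strip $\tfrac12+\vep\le\sigma\le1+\vep$ as a finite, manifestly holomorphic expression, the same computation delivers the analytic continuation to $\re s>\tfrac12$ directly, with no appeal to Phragm\'en--Lindel\"of.
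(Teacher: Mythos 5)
Your overall architecture matches the paper's: Möbius detection of squarefreeness, opening the divisibility by additive characters, an approximate functional equation for each twist $L_\f(s,u/d)$ near $\re s=\tfrac12$, the corrected functional equations tied to the expansions at the cusps $0$ and $-\tfrac12$, the all-cusp coefficient bounds from Section~\ref{S3}, and the Rankin--Selberg second moment \eqref{msq}; the continuation then indeed follows from uniform convergence without Phragm\'en--Lindel\"of. However, at the step you yourself flag as the crux, your proposed mechanism is the wrong one and would not reach $\re s>\tfrac12$. The sum over $u\bmod d$ on the dual side is a \emph{complete} Kloosterman--Sali\'e-type sum, so there is nothing for P\'olya--Vinogradov (which treats incomplete sums) or a large sieve (which averages quasi-orthogonal characters) to do: the sum is evaluated exactly in Lemma~\ref{ksb}, and its quadratic-character part is a genuine Gauss sum of modulus exactly $\sqrt{b}$. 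That square-root bound is already contained in the Weil estimate \eqref{weil}, and the paper shows (Proposition~\ref{JL}) that square-root cancellation in the $u$-sum alone only yields continuation to $\re s>\tfrac23$. So "detecting square-root cancellation in the ensuing character sums" cannot be the source of the improvement from $\tfrac23$ to $\tfrac12$.

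The ingredient your proposal is missing is the \emph{arithmetic structure} of the evaluated $u$-sum beyond its size: by \eqref{charsum} the $a^2$-part of the complete sum is a Ramanujan sum, $a^2\sum_{f\mid a^2}\mu(f)f^{-1}\mathbbm{1}_{a^2/f}(m)$, which forces the dual variable $m$ to be divisible by $a^2/f$, i.e.\ to carry a large square factor $g^2$ with $a=fg$. The decisive saving in the $a$-summation then comes from stripping this forced square out of the dual coefficients via the Hecke relation $|\lambda(tr^2)|\ll|\lambda(t)|\tau(r)^2$ -- and this is exactly why the paper must first prove (Lemmas~\ref{lem-eigen} and \ref{lsqfree}, and the discussion around \eqref{ineq}) that $\g$ and $\fh$, not just $\f$, inherit the eigenform property and satisfy such bounds, since the dual coefficients are Fourier coefficients at the other cusps. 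You cite the Section~\ref{S3} bounds only as a Ramanujan substitute for a per-twist convexity estimate; without coupling them to the divisibility constraint coming from the Ramanujan sums, your $(a,u)$-summation does not gain enough, and the argument stalls at $\re s>\tfrac23$ (or worse).
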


\begin{remark} It follows immediately the Riesz mean $\sum_{t\le x}^\flat (1-t/x)\lambda_\f (t) \ll x^{1/2+\vep}$, exhibiting a support towards square-root cancellation of $\{\lambda_\f(t)\}_{n\ge 1}^\flat$. 

An application of Theorem~\ref{main} is  a  better lower bound (than \cite[Theorem 4]{LauRoyerWu2014}) for the sign-changes of $\{\lambda_\f(t)\}^\flat_{t\ge 1}$ with $t\in [1,x]$ 
and the other is to consider mean value of $\lambda_\f(t)$, which will be done in other papers. 
\end{remark}

\vskip 8mm

\section{Half-integral weight cusp forms for $\Gamma_0(4)$}\label{S3}

We follow Shimura \cite{Shimura1973}  to explicate the definition of \(\f \in \cush\).  The main aim is to discuss some properties of the Fourier coefficients at {\it all} cusps  when $\f$ is a complete Hecke eigenform.

Let $GL_2^+(\R)$ be the set of all real $2\times 2$ matrices with positive determinant. Define $\widetilde{G}$ to be the set of all $(\alpha, \varphi(z))$ where $\alpha = \Big(\begin{matrix}a & b\\ c & d\end{matrix}\Big)\in GL_2^+(\R)$  
and $\varphi(z)$ is a holomorphic function on $\pk$ such that
\[
\varphi(z)^2 := \varsigma \det(\alpha)^{-1/2}(cz+d), 
\quad 
\mbox{for some $\varsigma\in \C$ with $|\varsigma|=1$}.
\]
Then $\widetilde{G}$ is a group under the composition law
\(
(\alpha, \varphi(z))(\beta, \psi(z))= (\alpha\beta, \varphi(\beta z) \psi(z)).
\)
The projection map $(\alpha, \varphi(z))\mapsto \alpha$ is a surjective homomorphism 
from $\widetilde{G}$ to $GL_2^+(\R)$. We write $(\alpha,\varphi(z))_*=\alpha$. 
Let $f$ be any complex-valued function on $\pk$.  
The slash operator $\xi\mapsto f\vert_{[\xi]}$, defined as  
\[
f\vert_{[\xi]} := \varphi(z)^{-(2\ell+1)} f(\alpha z) 
\quad
\text{if}\;\,
\xi = (\alpha, \varphi(z)),
\] 
gives an anti-homomorphism on $\widetilde{G}$, i.e. $f\vert_{[\xi\eta]}= (f\vert_{[\xi]})\vert_{[\eta]}$.

Define for $\gamma = \Big(\begin{matrix}a & b\\ c & d\end{matrix}\Big)\in \Gamma_0(4)$ and $z\in \pk$, 
$$
j(\gamma,z) :=\frac{\theta(\gamma z)}{\theta(z)} 
= \vep_d^{-1} \bigg(\frac{c}{d}\bigg)(cz+d)^{1/2},
$$
where $\vep_d = 1$ or $\ic$ according as $d \equiv 1$ or $3$ $({\rm mod}\,4)$, the extended Jacobi symbol $\big(\frac{c}{d}\big)$  and the square root $(cz+d)^{1/2}$ are  defined  as in \cite{Shimura1973}. The map \(\gamma\mapsto  \gamma^*\)  with 
\(
\gamma^*:= (\gamma, j(\gamma,z))
\)
is an one-to-one homomorphism from $\Gamma_0(4)$ to $\widetilde{G}$. For $\gamma\in \Gamma_0(4)$, we will abbreviate $f\vert_{[\gamma^*]}$ as $f\vert_{[\gamma]}$.

A cusp form $\f$ of weight $\ell+1/2$ for $\Gamma_0(4)$ is a  holomorphic function on $\pk$ such that 
\begin{enumerate}
\item[$1^\circ$] 
$\f|_{[\gamma]} = \f$ for all $\gamma\in \Gamma_0(4)$,
\item[$2^\circ$] 
$\f$ admits a Fourier series expansion at every cusp $\fa\in \{0,-\tfrac{1}{2}, \infty\}$, 
\[
\f\vert_{[\rho]}=\sum_{\substack{n\in \Z\\ n+r>0}} c_n \ee((n+r)z).
\]
Here  $\rho\in \widetilde{G}$ satisfies that its projection is a scaling matrix for the cusp $\fa$, i.e. $\rho_*(\infty)=\fa$, and for some $|t|=1$, 
$$
\rho^{-1} \eta^* \rho = \left(\eta_\infty, t\right)
\quad\text{with}\;\,
\eta_\infty:=\begin{pmatrix} 1 & 1 \\ & 1\end{pmatrix},
$$
where $\eta$ is a generator of the stabilizer $\Gamma_\fa$ in $\Gamma_0(4)$ for the cusp $\fa$.
The value of 
$r\in [0,1)$ is determined by $\ee(r)=t^{2\ell+1}$. (See \cite[p.444]{Shimura1973}.)
\end{enumerate}

\subsection{Fourier expansions at the three cusps}
Explicitly we take $\rho=\rho_\fa$ where
\begin{eqnarray}
\rho_\fa= 
\left\{
\begin{array}{ll}
\left( \begin{pmatrix} 1 &  \\  & 1\end{pmatrix}, 1\right)  & \mbox{ for $\fa=\infty$},\vspace{1mm}
\\\noalign{\vskip 0,5mm}
\left( \begin{pmatrix} 1 &  \\ -2 & 1 \end{pmatrix}, (-2z+1)^{1/2}\right) & \mbox{ for $\fa=-\frac12$}, \vspace{1mm}
\\\noalign{\vskip 0,5mm}
\left( \begin{pmatrix} & -1 \\ 4 & \end{pmatrix}, 2^{1/2}(-\ic z)^{1/2}\right) & \mbox{ for $\fa=0$}.
\end{array}
\right.
\end{eqnarray}
Set \(\eta_\fa = {\rho_\fa}_* \eta_\infty {\rho_\fa}_*^{-1}\).  
Then $\eta_\fa \in \Gamma_0(4)$ for all the three cusps. 
A direct checking shows that  \(\rho_\fa^{-1}  \eta_\fa^*\rho_\fa =  \left(\eta_\infty, t_\fa\right)\) 
where $t_\fa = 1,\ic, 1$ for $\fa=\infty, -\tfrac{1}{2}, 0$, respectively. 
(When $\fa=-\tfrac{1}{2}$, the factor $\vep_{-1}^{-1}\left(\frac{-4}{-1}\right)$ inside $j(\eta_\fa^*,z)$ equals $\ic$.)
Hence, for \(\f \in \cush\),  $\f(z+1)=\f(z)$ (note $\f\vert_{[\rho_\infty]}=\f$) and $\f\vert_{[\rho_0]}(z+1)=\f\vert_{[\rho_0]}(z)$, while for $\fa=-\tfrac{1}{2}$,
\begin{eqnarray}\label{gperiod}
\f\vert_{[\rho_{\fa}]}(z+1) 
= t_{-1/2}^{2\ell +1}  \f\vert_{[\rho_{\fa}\eta_\infty]}(z) 
= \ic^{2\ell +1} \big(\f\vert_{[\eta_{\fa}^*]}\big)\vert_{[\rho_{\fa}]}(z) 
=\ic^{2\ell +1} \f\vert_{[\rho_{\fa}]}(z).
\end{eqnarray} 

Let $\alpha= \left(\begin{pmatrix} 4 & \\ & 1\end{pmatrix}, 2^{-1/2}\right)$. For our purpose, we set 
\begin{equation}\label{ghdef}
\g(z) := \big(\f \vert_{[\rho_{-1/2}]}\big)\vert_{[\alpha]} (z) =2^{\ell+1/2} \f \vert_{[\rho_{-1/2}]} (4z)
\quad \mbox{ and } \quad 
\fh(z) := \f\vert_{[\rho_0]}(z).
\end{equation}
Their Fourier series expansions (at $\infty$) are of the form
\begin{equation}\label{g-fourier}
\begin{aligned}
\g(z) 
& = 2^{\ell+1/2} \sum_{n\ge 0} c_n \ee\big((4 n+(2+(-1)^{\ell-1}))z\big)
\\
& = 2^{\ell+1/2} \sum_{n\ge 1} \lambda_{\g}(n) n^{\ell/2-1/4}\ee(nz), 
\quad
\text{say,}
\end{aligned}
\end{equation}
where the sequence $\{\lambda_{\g}(n)\}$ is supported on positive integers $n\equiv (-1)^{\ell}\,(\bmod\,{4})$,  and 
\begin{equation}\label{h-fourier}
\fh(z) 
= \sum_{n\ge 1} \lambda_{\fh}(n) n^{\ell/2-1/4}\ee(nz).
\end{equation}
\begin{remark}\label{rmki} (i) The cusp form $\fh(z)$ is $\f_0(z)$ in \cite{HKKL2012} but $\g(z) = \f_{\frac12} (4z)$, not $\f_{\frac12}(z)$, there.  The Fourier expansion of $\f_{\frac12}(z)$  at $\infty$ is  of the form $\sum_{n\ge 1} c_n \ee((n+\frac14)z)$. 
\par
(ii)  The form $\fh$ is a cusp form for $\Gamma_0(4)$ but $\g$ is a cusp form for $\Gamma_0(16)$. 
\par
(iii) Using the Rankin-Selberg theory, one can prove that 
\begin{equation}\label{msq}
\sum_{n\le x} |\lambda_f (n)|^2\sim x \qquad \mbox{($f=\f,\g$ or $\fh$).}
\end{equation}
See \cite[Section 3]{LauRoyerWu2014}, for example. (There the assumption that $\f$ is a complete Hecke eigenform is not necessary, which is clearly seen from the proof.) 
\end{remark}

\subsection{Eigenform properties of a complete Hecke eigenform at various cusps}

Let $N$ be a positive integer divisible by $4$, and $p\nmid N$ be any prime. 
The action of the Hecke operator $\mathsf{T}(p^2)$ on a modular form $f$ of half-integral weight $\ell+1/2$ 
for $\Gamma_0(N)$ is defined as (cf. \cite[p.451]{Shimura1973})
\begin{eqnarray*}
\mathsf{T}(p^2) f 
:=   p^{\ell -3/2} \Big\{\sum_{0\le b<p^2} f\vert_{[\alpha_b^\star]} + \sum_{1\le h<p} f\vert_{[\beta_h^\star]} + f\vert_{[\sigma^\star]}\Big\},
\end{eqnarray*}
where 
\begin{align*}
\alpha_b^\star 
& := \big(\alpha_b, p^{1/2}\big) 
= \left(\begin{pmatrix} 1 & b \\ & p^2\end{pmatrix}, p^{1/2}\right),
\\
\beta_h^\star 
& := \big(\beta_h, \vep_p^{-1} \big(\tfrac{-h}{p}\big)\big)
= \left(\begin{pmatrix} p & h \\ & p\end{pmatrix},\vep_p^{-1} \left(\frac{-h}p\right)\right),
\\
\sigma^\star 
& := \big(\sigma, p^{-1/2}\big) 
= \left(\begin{pmatrix} p^2 &  \\ & 1\end{pmatrix}, p^{-1/2}\right).
\end{align*}
Suppose $\f$ is a complete Hecke eigenform, i.e. $\mathsf{T}(p^2)\f =\omega_p \f$ for all prime $p$.  One may wonder whether  $\g$ and $\fh$ defined as in \eqref{ghdef} are eigenforms. We can prove the following. 
\begin{lemma}\label{lem-eigen} Let $p$ be any odd prime. If $\f$ is an eigenform of $\mathsf{T}(p^2)$, then so are the forms $\g$ and $\fh$ defined in \eqref{ghdef} and both have the same eigenvalues as $\f$. 
\end{lemma}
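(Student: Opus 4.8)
The plan is to write each of $\g$ and $\fh$ as $\f$ slashed by a single fixed element of $\widetilde G$ and then to show that $\mathsf T(p^2)$ commutes with that slash. By \eqref{ghdef} we have $\g=\f\vert_{[\tau_\g]}$ with $\tau_\g=\rho_{-1/2}\alpha$ and $\fh=\f\vert_{[\tau_\fh]}$ with $\tau_\fh=\rho_0$, where $\tau_\g,\tau_\fh\in\widetilde G$ are fixed and their projections $(\tau_\g)_*,(\tau_\fh)_*$ have determinant a power of $2$. Since $p$ is odd it is coprime to the levels $4$ and $16$ of $\f,\fh$ and $\g$ (Remark~\ref{rmki}(ii)), so $\mathsf T(p^2)$ — defined by the same representatives $\alpha_b^\star,\beta_h^\star,\sigma^\star$, which do not depend on the level — acts on all three forms. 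Writing these representatives collectively as $\xi_\nu$ ($1\le\nu\le p^2+p$), it thus suffices to prove $\mathsf T(p^2)\big(\f\vert_{[\tau]}\big)=\big(\mathsf T(p^2)\f\big)\vert_{[\tau]}$ for $\tau\in\{\tau_\g,\tau_\fh\}$: combined with $\mathsf T(p^2)\f=\omega_p\f$ this gives $\mathsf T(p^2)(\f\vert_{[\tau]})=\omega_p\,\f\vert_{[\tau]}$, which is exactly the claim, with the eigenvalue preserved.

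Because $\xi\mapsto f\vert_{[\xi]}$ is an anti-homomorphism on $\widetilde G$, one has $\mathsf T(p^2)(\f\vert_{[\tau]})=p^{\ell-3/2}\sum_\nu\f\vert_{[\tau\xi_\nu]}$ and $(\mathsf T(p^2)\f)\vert_{[\tau]}=p^{\ell-3/2}\sum_\nu\f\vert_{[\xi_\nu\tau]}$. The desired commutation therefore reduces to the combinatorial assertion that conjugation by $\tau$ permutes the Hecke cosets: there are a permutation $\pi$ of the index set and elements $\gamma_\nu\in\Gamma_0(4)$ with
\begin{equation*}
\tau\,\xi_\nu=\gamma_\nu^{*}\,\xi_{\pi(\nu)}\,\tau\qquad(1\le\nu\le p^2+p).
\end{equation*}
Granting this, property $1^\circ$ gives $\f\vert_{[\tau\xi_\nu]}=\big(\f\vert_{[\gamma_\nu^{*}]}\big)\vert_{[\xi_{\pi(\nu)}\tau]}=\f\vert_{[\xi_{\pi(\nu)}\tau]}$, and summing over $\nu$ and reindexing by $\pi$ turns $\sum_\nu\f\vert_{[\tau\xi_\nu]}$ into $\sum_\nu\f\vert_{[\xi_\nu\tau]}$, as needed.

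To establish the displayed identity I would first verify it after projecting to $GL_2^+(\R)$ and then lift it to $\widetilde G$. At the projection level it is the classical fact that a Hecke operator at a prime $p$ not dividing the level commutes with scaling operators supported at the complementary primes: since $(\xi_\nu)_*$ involves only $p$ while $\tau_*$ involves only $2$, and $\gcd(p,2)=1$, the matrices $\tau_*(\xi_\nu)_*\tau_*^{-1}$ again run over a complete set of coset representatives for $\Gamma_0(4)\backslash\Gamma_0(4)\,\mathrm{diag}(1,p^2)\,\Gamma_0(4)$, which produces the $\gamma_\nu\in\Gamma_0(4)$ and the permutation $\pi$. For $\fh$ this is transparent, as $(\rho_0)_*=\left(\begin{smallmatrix}0&-1\\4&0\end{smallmatrix}\right)$ is the Fricke involution $W_4$ and $W_4\,\mathsf T(p^2)=\mathsf T(p^2)\,W_4$ is standard; for $\g$ one carries out the analogous computation with $(\tau_\g)_*=\left(\begin{smallmatrix}4&0\\-8&1\end{smallmatrix}\right)$, choosing the representatives so that the conjugates reduce modulo $\Gamma_0(4)$ to a permutation of the $\xi_\nu$.

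The main obstacle is not this $GL_2^+$-level bookkeeping but the lift to the metaplectic group $\widetilde G$: one must check that the holomorphic automorphy factors $\varphi(z)$ carried by $\tau$ and by each $\xi_\nu$ combine so that the correcting factor is genuinely $\gamma_\nu^{*}=(\gamma_\nu,j(\gamma_\nu,z))$. Concretely this means the theta-multiplier signs $\vep_d$, the Jacobi symbols $\big(\tfrac{c}{d}\big)$, the factors $\vep_p^{-1}\big(\tfrac{-h}{p}\big)$ attached to the $\beta_h^\star$, and the chosen branches of the square roots $(cz+d)^{1/2}$ must all match on the two sides of the identity. This is precisely the type of metaplectic phase responsible for the period subtlety at the cusp $-\tfrac12$ recorded in \eqref{gperiod}, so it has to be tracked with care; once these phases are checked, the permutation $\pi$ and the common eigenvalue $\omega_p$ fall out as above.
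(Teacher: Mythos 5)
Your proposal follows essentially the same route as the paper: both reduce the lemma to the assertion that the elements $\rho\,\xi_\nu$ and $\xi_\nu\,\rho$ (for $\rho\in\{\rho_{-1/2}\alpha,\rho_0\}$ and $\xi_\nu$ the Hecke representatives) determine the same collection of cosets modulo $\Delta_0=\Gamma_0(N)^*$ inside $\widetilde{G}$, with the genuine content being the matching of the metaplectic $\varphi$-parts, which you correctly identify as the main obstacle. The only difference is that the paper actually carries out this verification --- listing the explicit coset correspondences for both cusps and checking the automorphy factors in the $\beta_h^\star$ case --- whereas your write-up leaves that computation as a task still to be done.
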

\begin{proof} Let $N=4$ or $16$, and $\Delta_0=\Gamma_0(N)^*$ be the image of $\Gamma_0(N)$ under the lifting map.  It suffices to show that 
for (i) $\rho_{-1/2}\alpha$, $N=16$ and (ii) $\rho=\rho_0$, $N=4$, the elements $\rho \alpha_b^\star, \rho\beta_h^\star ,\rho\sigma^\star$ ($0\le b<p^2$, $1\le h<p$) form  a set  of representatives for 
\begin{eqnarray*}
\Delta_0\backslash \Big(\Delta_0 \sigma^\star \rho \sqcup \bigsqcup_{1\le h<p^2} \Delta_0 \beta_b^\star \rho \sqcup \bigsqcup_{0\le b<p^2} \Delta_0 \alpha_b^\star \rho \Big).
\end{eqnarray*}

(i) 
For the case $\rho=\rho_{-1/2}\alpha$, we check by routine calculation that 
\begin{align*}
\Delta_0\rho \sigma^\star
& = \Delta_0 \alpha_{(p^2+1)/2}^\star\rho, 
\\ 
\Delta_0 \rho \alpha_{(p^2-1)/8}^\star
& = \Delta_0\sigma^\star \rho, 
\\
\{\Delta_0 \rho\beta_h^\star \,:\, 1\le h<p\}
& = \{\Delta_0 \alpha_d^\star \rho \,:\, p\,\|\,(1-2d)\}, 
\\
\{\Delta_0 \rho \alpha_b^\star \,:\, p\nmid (1+8b)\}
& = \{\Delta_0 \alpha_d^\star\rho \,:\, p\nmid 1(-2d)\}, 
\\
\{\Delta_0\rho \alpha_d^\star \,:\, p\,\|\,(1+8b)\}
& = \{\Delta_0\beta_h^\star\rho \,:\, 1\le h<p\}.
\end{align*}
For example,  from
$$
\rho=\rho_{-1/2}\alpha = \left(\begin{pmatrix} 4 & \\ -8 & 1 \end{pmatrix}, 2^{-1/2}(-8z+1)^{1/2}\right)
$$ 
we obtain  $\rho_* \beta_h \rho_*^{-1}=  \gamma\alpha_{d}$, where
$$
\gamma
= \begin{pmatrix} 
p+8h & (4h-d(p+8h))p^{-2}
\\\noalign{\vskip 2mm}
-16h & (p-8h+16hd)p^{-2}
\end{pmatrix} 
\in \Gamma_0(16)
$$
if we take $1\le d<p^2$ such that $d(p+8h)\equiv 4h\,(\bmod\,{p^2})$. 
Note that this choice implies $p-8h+16hd\equiv p(1-2d)\,(\bmod\,{p^2})$ and $dp\equiv 4h(1-2d)\,(\bmod\,{p^2})$. 
The latter implies $p\mid (1-2d)$, so the former is $\equiv 0\,(\bmod\,{p^2})$. 
Next the $\varphi$-part of $\rho\beta_h^\star \rho^{-1}$ is 
$$
\vep_p^{-1} \left(\frac{-h}p\right) (-16hz+p-8h)^{1/2} p^{-1/2}.
$$
To evaluate the $\varphi$-part of $\gamma^*\alpha_d^\star$,  
we remark that $j(\gamma, z)= j(\gamma^{-1}, \gamma z)^{-1}$ 
and thus consider ${\gamma^*}^{-1}$ whose $j$-part is simply 
$$
\vep_{p+8h}^{-1} \left(\frac{16h}{p+8h}\right) (16hz+p+8h)^{1/2}.
$$
Hence, the $\varphi$-part of $\gamma^*\alpha_d^\star$ is
$$
\vep_p \left(\frac{h}{p} \right)(-16\gamma \alpha_d z+ p+8h)^{-1/2} p^{1/2}.
$$
From $\gamma\alpha_d = \rho_*\beta_h\rho_*^{-1}$ and $\vep_p \big(\tfrac{-1}p\big) = \vep_p^{-1}$, 
we easily verify this case. 
The other cases are checked in the same way.

\par
(ii) For the case $\rho=\rho_0$, we find similarly that
\begin{align*}
\Delta_0\rho \sigma^\star
& = \Delta_0 \alpha_{0}^\star\rho, 
\\ 
\Delta_0 \rho \alpha_{0}^\star
& = \Delta_0\sigma^\star \rho, 
\\
\{\Delta_0 \rho \beta_h^\star \,:\, 1\le h<p\}
& = \{\Delta_0 \alpha_{pd}^\star \rho \,:\, 1\le d<p\}, 
\\
\{\Delta_0\rho \alpha_d^\star \,:\, p\,\|\,b\}
& = \{\Delta_0\beta_h^\star\rho \,:\, 1\le h<p\},
\\
\{\Delta_0 \rho \alpha_b^\star \,:\, p\nmid b\}
& = \{\Delta_0 \alpha_d^\star\rho \,:\, p\nmid d\}.
\end{align*}
\end{proof}

\subsection{Shimura's correspondence and bounding coefficients}
Let $\f\in \cush$, not necessarily a complete Hecke eigenform. By Shimura's theory \cite[Section 3]{Shimura1973}, for any squarefree $t\ge 1$, there is a cusp form ${\rm Sh}_t \f$ of weight $2\ell$ for $\Gamma_0(2)$ such that 
\begin{equation}\label{Sl}
t^{\ell/2-1/4} L(s+\tfrac12, \chi_t) \sum_{n\ge 1} \lambda_\f(tn^2) n^{-s} = L(s, {\rm Sh}_t \f),
\end{equation}
where $L(\cdot,\chi_t)$ is the Dirichlet $L$-function associated to the character 
$$
\chi_t(n) = \chi_0(n) \bigg(\frac{-1}n\bigg)^\ell \bigg(\frac{t}n\bigg)
$$
($\chi_0$ is the principal character mod $4$) and 
$L(s, F) := \sum_{n\ge 1} \lambda_F(n)n^{-s}$ is the $L$-function for the cusp form of integral weight $2\ell$ with nebentypus $\chi_0^2$,
$$
F(z) = \sum_{n\ge 1} \lambda_F(n)n^{(2\ell-1)/2} \ee(nz).
$$

The Shimura lift $\f\mapsto {\rm Sh}_t\f$ commutes with Hecke operators:  $ {\rm Sh}_t (\mathsf{T}(p^2)\f)= T(p) ({\rm Sh}_t \f) $ for all primes $p$.\footnote{\mbox{}  This commutativity is pointed out in \cite[Corollary 3.16]{Ono2004} under the extra condition $p\nmid 4tN$, which is relaxed to all primes $p$ in \cite{Purkait2013}.} 
It follows that  the coefficients $\lambda_\f(mp^{2r})$ satisfy a recurrence relation in $r$  
when $\f$ is a $\mathsf{T}(p^2)$-Hecke eigenform. 
Moreover, if $\f$ is a Hecke eigenform  of $\mathsf{T}(p^2)$ for all $p\notin \mathcal{S}$ 
(where $\mathcal{S}$ is any set of primes), the right-hand side of \eqref{Sl} will admit a factorization 
(see Corollary 1.8 and Main Theorem in \cite{Shimura1973})
\begin{equation}\label{Shimuraep}
t^{\ell/2-1/4} L(s+\tfrac12, \chi_t) \sum_{n\ge 1} \frac{\lambda_\f(tn^2)}{n^s} 
= \sum_{\substack{n\ge 1\\ p\nmid n \Rightarrow p\in \mathcal{S}}} \frac{\lambda_{{\rm Sh}_t\f}(n)}{n^s} 
\prod_{p\notin \mathcal{S}} \bigg(1-\frac{\omega_p}{p^s} + \frac{\chi_0(p) }{p^{2s}}\bigg)^{-1},
\end{equation}
where $\mathsf{T}(p^2) \f = \omega_p p^{(2\ell -1)/2} \f$. 
Remark that the product $\prod_{p\notin \mathcal{S}}$ remains the same for lifts of  different squarefree $t$'s. 

The commutativity between ${\rm Sh}_t$ and $\mathsf{T}(p^2)$ implies that  $\omega_p$ is also an eigenvalue of the Hecke operator $T(p)$ for ${\rm Sh}_t\f$. Decompose 
\begin{equation}\label{lc}
{\rm Sh}_t\f (z) = \sum_i c_i f_i(\ell_iz)
\end{equation}
where each $f_i$ is a newform (of perhaps lower level) and $f_i(\ell_i z)$'s are linearly independent. 
Let $\mathcal{S}'$ be the set of all prime $p$ dividing the level of ${\rm Sh}_t\f$,  
so $\mathcal{S}'=\{2\}$ in our case. If $p\notin \mathcal{S}'$, then $T(p)(f_i(\ell_i z)) = (T(p)f_i)(\ell_iz)$, $\forall$ $i$. 
(See \cite[(2.14)]{ILS2000} and \cite[Section 14.7]{IK2004}.) 
Applying $T(p)$ on both sides of \eqref{lc}, we thus see that $\omega_p$ is the $T(p)$-eigenvalue of some newform (for $p \nmid$ level of ${\rm Sh}_t\f$) and hence 
\begin{equation}\label{omegap}
|\omega_p|\le 2 \quad\mbox{ $\forall$ $p\notin \mathcal{S}\cup \mathcal{S}'$}, 
\end{equation}  
by Deligne's bound. Consequently we have the following estimate for \(\f\in \cush\).

\begin{lemma}\label{lsqfree} 
Let $\mathcal{Q}$ be a (not necessarily finite) set of primes with $2\in \mathcal{Q}$. 
Suppose $\f$ is an Hecke eigenform of $\mathsf{T}(p^2)$ for all $p\notin \mathcal{Q}$.  
Let $m\ge 1$ be any integer decomposed into $m=qr^2$ such that  $p\mid r$ implies $p\notin \mathcal{Q}$, 
and $p^2\mid q$ implies $p\in \mathcal{Q}$. Then we have
$$
|\lambda_\f(m)|\le |\lambda_\f(q)|\tau(r)^2.
$$
\end{lemma}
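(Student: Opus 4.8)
The plan is to prove a \emph{local} estimate at each prime $p\notin\mathcal{Q}$ and then multiply. Write $m=qr^2$ as in the statement and factor $r=\prod_{i=1}^{k}p_i^{b_i}$ into its distinct prime powers; by hypothesis every $p_i\notin\mathcal{Q}$, and each $p_i$ is odd since $2\in\mathcal{Q}$. Set $m_0:=q$ and $m_i:=q\,p_1^{2b_1}\cdots p_i^{2b_i}$, so that $m_k=m$. The key structural observation is that when $p_i$ is peeled off, its exponent in the current product $m_{i-1}$ equals exactly the exponent $a_{p_i}$ of $p_i$ in $q$, and the contrapositive of the condition $p^2\mid q\Rightarrow p\in\mathcal{Q}$ forces $a_{p_i}\le 1$. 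Writing $m_{i-1}=Mp_i^{a_{p_i}}$ with $p_i\nmid M$, I will bound $|\lambda_\f(m_i)|=|\lambda_\f(Mp_i^{a_{p_i}+2b_i})|$ in terms of $|\lambda_\f(m_{i-1})|=|\lambda_\f(Mp_i^{a_{p_i}})|$ and then telescope.

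For the local step I use the recurrence furnished by the eigenvalue equation $\mathsf{T}(p^2)\f=\omega_p\,p^{(2\ell-1)/2}\f$. Via Shimura's formula for the action of $\mathsf{T}(p^2)$ on Fourier coefficients, this translates, in terms of the unnormalised coefficients $a_\f(n)=\lambda_\f(n)n^{\ell/2-1/4}$, into
\begin{equation*}
a_\f(p^2n)+\Big(\tfrac{(-1)^\ell n}{p}\Big)p^{\ell-1}a_\f(n)+p^{2\ell-1}a_\f(n/p^2)=\omega_p\,p^{(2\ell-1)/2}a_\f(n)\qquad(\forall n).
\end{equation*}
Specialising $n=Mp^{a_p+2j}$ (with $p=p_i$) and returning to the normalised coefficients $c_j:=\lambda_\f(Mp^{a_p+2j})$, I obtain the three-term recurrence $c_{j+1}=\omega_p c_j-c_{j-1}$ for $j\ge 1$ (the Legendre symbol vanishes since $p\mid n$, and the normalising powers of $p$ cancel), together with $c_1=\omega_p c_0$ when $a_p=1$ and $c_1=(\omega_p-\psi p^{-1/2})c_0$ when $a_p=0$, where $\psi=\big(\tfrac{(-1)^\ell M}{p}\big)\in\{\pm1\}$; the term $a_\f(n/p^2)$ drops out at $j=0$ precisely because $a_p\le 1$.

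Solving this, let $\alpha,\beta$ be the roots of $X^2-\omega_p X+1$ and put $U_j:=(\alpha^{j+1}-\beta^{j+1})/(\alpha-\beta)$, so that $U_0=1$, $U_1=\omega_p$ and $U_{j+1}=\omega_p U_j-U_{j-1}$. Then $c_j=c_0U_j$ when $a_p=1$ and $c_j=c_0\big(U_j-\psi p^{-1/2}U_{j-1}\big)$ when $a_p=0$. Deligne's bound \eqref{omegap} gives $\omega_p\in\R$ with $|\omega_p|\le 2$ for every $p\notin\mathcal{Q}$, hence $\alpha\beta=1$ and $|\alpha|=|\beta|=1$, so that $|U_j|\le j+1$. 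In either case,
\begin{equation*}
|\lambda_\f(Mp^{a_p+2b})|=|c_b|\le |c_0|\,\big((b+1)+p^{-1/2}b\big)\le (b+1)^2\,|\lambda_\f(Mp^{a_p})|.
\end{equation*}
Applying this at each $i$ and telescoping yields
\begin{equation*}
|\lambda_\f(m)|\le |\lambda_\f(q)|\prod_{i=1}^{k}(b_i+1)^2=|\lambda_\f(q)|\,\tau(r)^2,
\end{equation*}
as claimed (the case $\lambda_\f(q)=0$ needs no separate argument, since the relations $c_j=c_0(\cdots)$ force $c_b=0$ whenever $c_0=0$). The only genuinely delicate point is the bookkeeping: one must verify that the exponent of the prime being peeled off is still at most $1$ in the current product, so that the base case of the recurrence is clean and the Deligne bound applies at a good prime; everything else is the routine solution of a Chebyshev-type recurrence.
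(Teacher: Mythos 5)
Your proof is correct, and it takes a genuinely different route from the paper's. The paper works globally: it reads off the coefficient of $(ur)^{-s}$ in the Euler-product factorization \eqref{Shimuraep} of the Shimura lift, obtaining $t^{\ell/2-1/4}\lambda_\f(tu^2r^2)$ as a product of two brackets, identifies the first bracket with $t^{\ell/2-1/4}\lambda_\f(q)$ by specializing to $r=1$, and bounds the second by $\tau(r)^2$ using $|\omega_c|\le\tau(c)$. You instead work locally, peeling off one prime $p\notin\mathcal{Q}$ at a time, deriving the three-term Hecke recurrence for $\lambda_\f(Mp^{a_p+2j})$ directly from Shimura's formula for $\mathsf{T}(p^2)$, and solving the resulting Chebyshev recurrence; your bound $|U_j|+p^{-1/2}|U_{j-1}|\le (j+1)^2$ plays exactly the role of the paper's $|\omega_c|\le\tau(c)$, and both arguments ultimately rest on the same two inputs, namely the eigenform property at good primes and the Deligne bound \eqref{omegap} (together with the reality of $\omega_p$, which the paper uses implicitly as well when it asserts $|\omega_c|\le\tau(c)$). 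What the paper's route buys is brevity and the fact that the $\mathcal{Q}$-part $u$ of $n$ is absorbed painlessly into the first bracket; what your route buys is a self-contained, transparent local computation that makes the initial conditions (where the hypothesis $a_p\le 1$ enters) completely explicit, and your bookkeeping on this point — that the exponent of each $p_i$ in the current product is exactly its exponent in $q$, hence at most $1$ — is exactly right. The only cosmetic caveat is that your displayed Hecke relation should carry the nebentypus values $\chi(p)$, $\chi(p^2)$ from Shimura's Theorem 1.7; since the character here is trivial and your argument only uses $|\psi|\le 1$ and that the coefficient of $c_{j-1}$ has modulus $1$, nothing changes.
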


\begin{remark}\label{rmk2} 
Every integer $m\ge 1$ decomposes uniquely into the desired form: 
Decompose $m$ uniquely into $m= tn^2$ where $t$ is squarefree, 
write $n= ur$ such that $p\mid u$ implies $p\in \mathcal{Q}$ 
and $p\mid r$ implies $p\notin \mathcal{Q}$, and then set $q=tu^2$.
\end{remark}

\begin{proof} 
Let $m=qr^2=tu^2r^2$ be decomposed as in Remark~\ref{rmk2}. By \eqref{Shimuraep}, we see that
\begin{eqnarray*}\label{tmp1}
t^{\ell/2-1/4} \lambda_\f(tu^2r^2) 
= \bigg(\sum_{ab=u} \lambda_{{\rm Sh}_t\f}(a)\mu(b)\frac{\chi_t(b)}{\sqrt{b}}\bigg)
\bigg(\sum_{cd= r} \omega_c \mu(d) \frac{\chi_t(d)}{\sqrt{d}}\bigg),
\end{eqnarray*}
where $\omega_c$ is the coefficient of $c^{-s}$ 
in $\prod_{p\notin \mathcal{Q}} \left(1-{\omega_p}p^{-s} + \chi_0(p) p^{-2s}\right)^{-1}$
and $\mu(d)$ is the M\"obius function. 
The case $r=1$ tells that the first bracket is $t^{\ell/2-1/4} \lambda_\f(tu^2)$, i.e. $t^{\ell-1/4} \lambda_\f(q)$. 
Next, since $|\omega_c|\le \tau(c)$ (by \eqref{omegap} and its definition), 
the absolute value of the second bracket is $\le \tau(r)^2$. 
\end{proof}

\subsection{Bounds for coefficients of a complete Hecke eigenform at all cusps}\hspace{-2mm}\footnote{\mbox{} The content of this subsection is not used in  the remaining part of the paper but we would include here for its own interest and for applications in other occasions.}
In case $\f$ is a complete Hecke eigenform, we may express \eqref{Shimuraep} as 
\begin{equation}\label{Shimuralift}
t^{\ell/2-1/4} L(s+\tfrac12, \chi_t) \sum_{n\ge 1} \lambda_\f(tn^2) n^{-s} 
= t^{\ell/2-1/4} \lambda_\f(t) L(s, F),
\end{equation}
where the Shimiura lift $F$ is a cusp form independent of $t$. As the Ramanujan's conjecture holds for holomorphic newforms of integral weight, the $r$th Fourier coefficients of $F$ are $\ll_\f \tau(r) r^{\ell-1/2}$, where the implied constant is independent of $t$. Consequently the question of the size of $\lambda_\f(m)$ is reduced to the size at the squarefree part of $m$:  
\begin{eqnarray}\label{lb}
\lambda_\f(m)\ll_\f |\lambda_\f(t)| \tau(r)^2
\end{eqnarray}
  if $m=tr^2$ and squarefree $t$. Due to Iwaniec \cite{Iwaniec1987} or Conrey \& Iwaniec \cite{ConreyIwaniec2000}, etc, there are good estimates for 
\begin{equation}\label{varrho}
\lambda_\f (t)\ll_{\f,\varrho} t^\varrho \qquad \mbox{ $\forall$ squarefree $t$},
\end{equation}
for some $0<\varrho<\tfrac{1}{4}$. The value of $\varrho$ is $\frac16+\vep$ by \cite{ConreyIwaniec2000}. 


We know from Lemma~\ref{lem-eigen} that for a complete Hecke eigenform $\f$, the forms $\g$ and $\fh$ are eigenforms of $\mathsf{T}(p^2)$ with the same corresponding eigenvalue for all odd prime $p$. But for $p=2$, we do not get the same conclusion.  This may result in an unpleasant situation of without \eqref{lb}. Note that Lemma~\ref{lsqfree} gives at most a bound of the form $|\lambda_f(t2^{2j})|\tau(r)^2$ (where $f=\g,\fh$).   Now we attempt to clarify as much as possible. 

In view of \cite[Proposition 1.5]{Shimura1973}, 
the Hecke operator $\mathsf{T}(2^2)$ is the same as the operator $U_4$ whose action is
\begin{eqnarray}\label{u4}
(f\vert {U_4})(z)
= \frac14\sum_{\nu \,({\rm mod}\,4)} f\bigg(\frac{z+\nu}4\bigg) 
= \sum_{n\ge 1} a(4n) \ee(nz)
\end{eqnarray}
if $f (z)=\sum_{n\ge 1} a(n) \ee(nz)$. Then it follows easily that $\g\vert \mathsf{T}(2^2) = 0$, 
because by \eqref{u4} and \eqref{gperiod}, 
\begin{equation}\label{gu4}
(\g\vert U_4)(z) 
= 2^{\ell-3/2}\sum_{\nu\,({\rm mod}\,4)} \f\vert_{[\rho_{-1/2}]}(z+\nu)
= 2^{\ell-3/2}\f\vert_{[\rho_{-1/2}]}(z) \sum_{0\le \nu\le 3} \ic^{\nu(2\ell+1)}
\end{equation}
where the sum is obviously zero. Thus $\g$ is also a complete Hecke eigenform although it takes the different eigenvalue $0$ for $\mathsf{T}(2^2)$, implying the validity \eqref{lb} for $\g$ as well. 

However for the case of $\fh$, we cannot get the conclusion of $\mathsf{T}(2^2)$-eigenform and we shall get the analogous bound via some bypass.  To its end, let us recall Niwa's result in \cite{Niwa1977}, cf. Kohnen \cite[p. 250]{Kohnen1980}, saying that $U_4W_4$ is Hermitian operator on \(\cush\) and 
\begin{eqnarray}\label{u4w4}
U_4W_4U_4W_4-\mu U_4W_4  - 2\mu^2=0,
\end{eqnarray} 
where $\mu= \left(\frac2{2\ell+1}\right) 2^{\ell-1}= (-1)^{\ell(\ell+1)/2} 2^{\ell-1}$ and 
$$
(f\vert W_4)(z) =(-2\ic z)^{-(\ell+1/2)} f(-\tfrac1{4z}) = f\vert_{[\rho_0]}(z).
$$
Suppose $\f\vert \mathsf{T}(2^2) = c \f$ for some scalar $c$.\footnote{\mbox{}  Here we write $\f\vert \mathsf{T}(p^2)$ for $\mathsf{T}(p^2)\f$.} By \eqref{u4w4} and $U_4= \mathsf{T}(2^2)$,  we get
$$
c (\f \vert W_4U_4W_4)-c \mu (\f\vert W_4)  - 2\mu^2 \f=0.
$$
(Note that the operator acts on $\f$ from right.) In particular we observe that $c\neq 0$, because otherwise,  
$-2\mu^2 \f =0$ implying $\f=0$. As $\fh= \f\vert W_4$ and $W_4$ is an involution (i.e. $W_4^2$ is the identity), 
we deduce that
\begin{eqnarray}\label{hu4}
(\fh\vert U_4) = \mu \f + 2\mu^2 c^{-1} \fh.
\end{eqnarray}
We separate into two cases:
\begin{itemize}
\item Case 1: $c^2\neq 2 \mu^2$. 
\par
We set $\alpha := c \mu /(2\mu^2-c^2)$ and consider the form 
$\mathfrak{H}:= \fh +\alpha \f\in \cush$. Then $c \alpha+\mu = 2\alpha \mu^2/c$ and thus by \eqref{hu4},
$$
\mathfrak{H}\vert\mathsf{T}(2^2) = \mathfrak{H}\vert U_4 = 2 \mu^2 c^{-1} \mathfrak{H}.
$$
i.e. The cusp form $\mathfrak{H}$ is an eigenform of $\mathsf{T}(2^2)$, and by Lemma~\ref{lem-eigen}, $\mathfrak{H}$ is also an eigenform of $\mathsf{T}(p^2)$ for all odd primes $p$. (Note that $\f$ and $\fh$ have the same $\mathsf{T}(p^2)$-eigenvalue.) Consequently, both coefficients $\lambda_{\mathfrak{H}}(m)$ and $\lambda_\f(m)$ satisfy \eqref{lb}. As $\lambda_\fh(m)= \lambda_{\mathfrak{H}}(m)-\alpha\lambda_\f(m)$, we establish \eqref{ineq} for $\fh$. 
\item Case 2: $c^2= 2 \mu^2$. 
\par
We infer from \eqref{hu4} and \eqref{u4} that for all integers $n\ge 1$,
\begin{eqnarray*}
4^{\ell/2-1/4} \lambda_\fh(4n) = \mu \lambda_\f(n)+ c \lambda_\fh(n).
\end{eqnarray*}
Let $d= c/4^{\ell/2-1/4}$. This recurrence relation gives
\begin{eqnarray*}
\lambda_\fh(4^Jn)= d^J \lambda_\fh(n) + \frac{\mu}{4^{\ell/2-1/4}}\sum_{1\le j<J} d^j\lambda_\f(4^{J-j}n).
\end{eqnarray*}
Note $\mu^2= 2^{2\ell-2}$, so $|d|= 1$, and \eqref{lb} holds for $\lambda_\f(4^{J-j}n)$. Hence, for any integer $m= tr^24^J$ where $t$ is squarefree and $r$ is odd, 
$$
\lambda_\fh(m)\ll |\lambda_\fh(tr^2)| + J^2|\lambda_\f(t)| \tau(r)^2.
$$
By Lemma~\ref{lsqfree} with $\mathcal{Q}=\{2\}$, we get $|\lambda_\fh(tr^2)|\ll |\lambda_\fh(t)| \tau(r)^2$ 
and consequently
$$
\lambda_\fh(m)
\ll (|\lambda_\fh(t)|+|\lambda_\f(t)|)J^2  \tau(r)^2
\ll (|\lambda_\fh(t)| +|\lambda_\f(t)|)\tau(r2^J)^2.
$$ 
\end{itemize}

In summary, we have proved the following. 

\begin{lemma}\label{lem2.3} Let $\f$ be a complete Hecke eigenform, $\g$ and $\fh$ be defined as in \eqref{ghdef}. For any integer $m=tr^2$ where $t\ge 1$ is squarefree,  we have
\begin{eqnarray}\label{ineq}
\lambda_f(m)\ll_\f |\lambda_f(t)| \tau(r)^2+|\lambda_\ff(t)| \tau(r)^2\ll_{\f,\varrho} t^\varrho \tau(r)^2
\end{eqnarray}
for $f=\f,\g,\fh$ respectively, where $\varrho$ satisfies \eqref{varrho}. The first implied $\ll$-constant depends only $\f$ and the second implied $\ll$-constant depends at most on $\f$ and $\varrho$.
\end{lemma}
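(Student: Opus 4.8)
The plan is to reduce each of the three cases $f=\f,\g,\fh$ to the benchmark bound \eqref{lb}, which holds for every complete Hecke eigenform, and then to feed in the subconvexity estimate \eqref{varrho} for the squarefree coefficients to reach the final $t^\varrho$ bound. For $f=\f$ nothing is needed, since \eqref{lb} is literally the asserted inequality (the term $|\lambda_\f(t)|$ in \eqref{ineq} already accounts for it). For $f=\g$ I would first verify that $\g$ is itself a complete Hecke eigenform: by Lemma~\ref{lem-eigen} it is a $\mathsf{T}(p^2)$-eigenform for every odd $p$ with the same eigenvalue as $\f$, so only $p=2$ remains, and there the identity $\mathsf{T}(2^2)=U_4$ from \eqref{u4} together with the quasi-periodicity \eqref{gperiod} collapses $\g\vert\mathsf{T}(2^2)$ to a multiple of the vanishing sum $\sum_{0\le\nu\le 3}\ic^{\nu(2\ell+1)}$, giving $\g\vert\mathsf{T}(2^2)=0$. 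Hence $\g$ is a complete eigenform (with $\mathsf{T}(2^2)$-eigenvalue $0$) and \eqref{lb} applies verbatim.

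The weight of the lemma lies in the case $f=\fh$, where the $U_4$-action no longer diagonalizes because $U_4W_4$ satisfies Niwa's quadratic relation \eqref{u4w4} instead of acting by a scalar. Writing $\f\vert\mathsf{T}(2^2)=c\f$, using $\fh=\f\vert W_4$ and that $W_4$ is an involution, I would first derive the mixing identity \eqref{hu4}, $\fh\vert U_4=\mu\f+2\mu^2c^{-1}\fh$, after noting $c\neq 0$ (else \eqref{u4w4} forces $\f=0$). The analysis then splits according to whether $2\mu^2-c^2$ vanishes.

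In the generic subcase $c^2\neq 2\mu^2$ the idea is to diagonalize by hand: set $\alpha:=c\mu/(2\mu^2-c^2)$ and put $\mathfrak{H}:=\fh+\alpha\f\in\cush$; a one-line computation from \eqref{hu4} gives $\mathfrak{H}\vert\mathsf{T}(2^2)=2\mu^2c^{-1}\mathfrak{H}$, so $\mathfrak{H}$ is a $\mathsf{T}(2^2)$-eigenform and hence, by Lemma~\ref{lem-eigen}, a complete Hecke eigenform. Applying \eqref{lb} to both $\mathfrak{H}$ and $\f$ and using $\lambda_\fh=\lambda_{\mathfrak{H}}-\alpha\lambda_\f$, the triangle inequality produces the first inequality of \eqref{ineq} for $\fh$.

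The degenerate subcase $c^2=2\mu^2$ is where this trick breaks down ($\alpha$ is undefined), and I expect it to be the main obstacle. Here $2\mu^2c^{-1}=c$, so from \eqref{hu4} and \eqref{u4} I would extract the scalar recurrence $4^{\ell/2-1/4}\lambda_\fh(4n)=\mu\lambda_\f(n)+c\lambda_\fh(n)$ and iterate it $J$ times. Since $\mu^2=2^{2\ell-2}$ gives $|c|=2^{\ell-1/2}$, the factor $d:=c/4^{\ell/2-1/4}$ has $|d|=1$, so the homogeneous term stays bounded while the inhomogeneous sum carries at most $J$ terms, each controlled by \eqref{lb} for $\f$ and, after removing the powers of $4$, by Lemma~\ref{lsqfree} with $\mathcal{Q}=\{2\}$ for $\fh$. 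Writing a general $m=tr^24^J$ with $t$ squarefree and $r$ odd and absorbing the resulting $J^2$ into $\tau(r2^J)^2$ then yields the first inequality of \eqref{ineq} in this subcase as well. Finally, in all three cases the second inequality of \eqref{ineq} follows by bounding the squarefree coefficients through \eqref{varrho}.
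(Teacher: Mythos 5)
Your proposal is correct and follows essentially the same route as the paper: the benchmark bound \eqref{lb} via the Shimura lift, the computation $\g\vert\mathsf{T}(2^2)=0$ from \eqref{gperiod} to handle $\g$, and for $\fh$ the identity \eqref{hu4} from Niwa's relation \eqref{u4w4} with the same dichotomy $c^2\neq 2\mu^2$ (diagonalize via $\mathfrak{H}=\fh+\alpha\f$) versus $c^2=2\mu^2$ (iterate the recurrence and invoke Lemma~\ref{lsqfree} with $\mathcal{Q}=\{2\}$). No substantive differences from the paper's argument.
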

\begin{remark} When $\f$ lies in the Kohnen plus space, 
the Hecke operator $\mathsf{T}^+(2^2):=\frac32 U_4{\rm pr}$ is taken in place of $\mathsf{T}(2^2)$, 
where ${\rm pr}$ is the orthogonal projection onto the plus space, cf. \cite[p. 42-43]{Kohnen1982}. 
If $\f$ is an eigenform of $\mathsf{T}^+(2^2)$ and $\mathsf{T}(p^2)$ for all odd primes $p$, 
then Lemma~\ref{lem2.3} will still be valid. 
Firstly Lemma~\ref{lem-eigen} and \eqref{Shimuralift} hold for $\f$ and hence \eqref{lb}. 
Next we claim \eqref{lb} holds for  $\g$ and $\fh$.  
For $\g$,  \eqref{gu4} holds if \(f\in\cush\), thus $\g$ is a complete Hecke eigenform so \eqref{lb} holds.  
Note $2\mu\fh= \f\vert_{U_4}$ once $\f$ is in the plus space, see \cite[Proposition 2]{Kohnen1980}; 
thus $2\mu \lambda_\fh(m)=\lambda_\f(4m)$, the claim follows from \eqref{lb} for $\f$. 
\end{remark}

\vskip 8mm

\section{A preparation}\label{S4}

We start with the method of proof  in \cite{HKKL2012} for the set-up. 
Meanwhile we amend, for the case $2\,\|\,d$, the functional equation to relate $\f$ with $\g$ (not $\f_{\frac12}$ in \cite{HKKL2012}), cf. \cite[(4.5)]{HKKL2012} and our Remark~\ref{rmki} (i).  
Lastly we indicate the vital components for improvement with a first attempt (see Proposition~\ref{JL} and Remark~\ref{proprmk}).

Define $\mathbbm{1}_{r^2}(n)=1$ if $r^2\mid n$ and $0$ otherwise.
Replace the divisibility condition with additive characters, we can write
$$
\mathbbm{1}_{r^2}(n) 
= \frac{1}{r^2} \sum_{u ({\rm mod}\,r^2)} \ee\bigg(\frac{nu}{d^2}\bigg)
= \frac{1}{r^2} \sum_{d\mid r^2} \sideset{}{^*}\sum_{u ({\rm mod}\,d)} \ee\bigg(\frac{nu}d\bigg),
$$
where ${\sum}^*_{u ({\rm mod}\,d)}$ runs over  $u (\bmod\,{d})$ coprime to $d$. 
Recall $\mu(n)^2=\sum_{r^2\mid n} \mu(r)$.
When $\sigma >1$, one thus has 
\begin{eqnarray}\label{Mfsum}
L_\f^\flat(s)
=\sum_{r=1}^\infty  \frac{\mu(r)}{r^2} 
\sum_{d\mid r^2} \sideset{}{^*}\sum_{u ({\rm mod}\,d)} L_\f(s, u/d)
\end{eqnarray}
where 
\begin{equation}\label{Lf}
L_\f(s, u/d)= \sum_{m\geq 1} \frac{\lambda_{\f}(m) \ee(m u/d)}{m^s}\cdot
\end{equation}
Let us also denote
\begin{equation}\label{Dr}
D_r(s) := r^{-2} \sum_{d\mid r^2} \sideset{}{^*}\sum_{u ({\rm mod}\,d)} L_\f (s,u/d).
\end{equation}
Now each summand $L_\f(s,u/d)$ extends to an entire function  (explained below), so the task is to establish the (uniform) convergence of the series in $r$. Hence this leads to the estimation of $L_\f(s,u/d)$ in terms of $r$. The method of Hulse et al. is to derive the functional  equation of $L_\f(s,u/d)$ and then apply the convexity principle to $D_r(s)$. They gave an estimate for $D_r(s)$ on the line $\sigma=-\vep$ by bounding $L_\f(s,u/d)$ individually. Consequently they proved that 
\begin{equation}\label{hulse}
D_r(s)\ll r^{2-4\sigma+\vep} (1+|\tau|)^{1-\sigma+2\vep} \quad \mbox{ ($-\vep\le \sigma \le 1+\vep$).}
\end{equation}

To obtain the functional equation of $L_\f(s,u/d)$, one considers for rational $q$, 
\begin{equation*}
\Lambda(\f,q,s) 
:= \int_0^\infty \f(\ic y+q)y^{s+\frac{\ell}{2} - \frac{1}{4}}\frac{\d y}{y}
= \frac{\Gamma(s + \frac{\ell}{2} - \frac{1}{4})}{(2\pi)^{s + \frac{\ell}{2} - \frac{1}{4}}} 
\sum_{m\geq 1} \frac{\lambda_{\f}(m)\ee(m q)}{m^s}\cdot
\end{equation*}
The integral is absolutely convergent for every $s \in \C$. We define $\Lambda(\g,q,s)$ and $\Lambda(\fh,q,s)$ in the same way. 

Let $q=u/d$ where $(u,d)=1$ and $d\ge 1$. By \cite[Lemma 4.3]{HKKL2012}, $\Lambda(\f,u/d,s)$ satisfies  a functional equation in connection with $\Lambda(\f,-\overline{u}/d, 1-s)$ and $\Lambda(\fh,-\overline{4u}/d,1-s)$ respectively according as $4\mid d$ or  $2\nmid d$, where  $x\overline{x}\equiv 1\,(\bmod\,{d})$. 
For the case $2\,\|\,d$, we revise $\f_{\frac12}$ to be $\g$, which causes a minor change of $\Lambda(\f_{\frac12},-\overline{u}/d, 1-s)$ into $\Lambda(\g,-\overline{u}/(4d), 1-s)$. We would unite the three functional equations into one. Let us introduce
\begin{eqnarray}\label{qd}
q_d=d \ \mbox{  or } \ 2d\ \mbox{ according to $4\mid d$ or not,}
\end{eqnarray}
 and the symbols $\lambda(n; d)$ and $\varpi_d(n,v)$ defined as:
 \begin{equation}\label{lambdafd}
{\renewcommand{\arraystretch}{1.8}
\renewcommand{\tabcolsep}{2.5mm}
\begin{tabular}{|c|c|c|c|}
\hline
{}
& $\lambda(n; d)$ 
& $\varpi_d(n,v)$
\\
\hline
$4\mid d$
& $\lambda_{\f}(n)$  
& $\vep_v^{2\ell+1} \big(\tfrac{d}v\big) \ee\big(\tfrac{-nv}{d}\big)$  
\\
\hline
$2\,\|\,d$
& $\lambda_{\g}(n)$  
& $\vep_v^{2\ell+1} \big(\tfrac{d}v\big) \ee\big(\tfrac{-nv}{4d}\big)$
\\
\hline
$2\nmid d$
& $\lambda_{\fh}(n)$  
& $\ic^{\ell+1/2} \vep_d^{-(2\ell+1)} \big(\tfrac{v}d\big) \ee\big(\tfrac{-\overline{4}nv}{d}\big)$  
\\
\hline
\end{tabular}
}
\end{equation}
with $4\overline{4}\equiv 1\,(\bmod\,{d})$. Write 
\begin{equation}\label{gamma}
L_\infty (s) := (2\pi)^{-s} \Gamma\big(s+\tfrac{\ell}2-\tfrac14)
\end{equation}
and 
\begin{equation}\label{Ltilde}
\widetilde{L}_\f(s, v/d) := \sum_{n\ge 1} \lambda(n; d) \varpi_d(n,v) n^{-s}.
\end{equation}
Now we rephrase \cite[Lemma 4.3]{HKKL2012} of Hulse et al. with the above modification for $2\,\|\,d$. 
\begin{lemma}\label{FunctionalEquation}
Let  $\f\in\cush$ where $\ell\ge 1$ be an integer, $d\in \N$ and $(u,d)=1$. Then $L_\f(s,u/d)$  extends analytically to an entire function and satisfies the functional equation: 
\begin{equation}\label{fe}
q_d^s L_\infty (s) L_\f(s,u/d) = \ic^{-(\ell+1/2)} q_d^{1-s} L_\infty(1-s) \widetilde{L}_\f (1-s, v/d)
\end{equation}
where  $uv\equiv 1\,(\bmod\,{d})$.
\end{lemma}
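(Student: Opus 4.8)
The plan is to prove \eqref{fe} by Hecke's classical Mellin-transform method, following \cite[Lemma 4.3]{HKKL2012} but correcting the treatment at the cusp $-\tfrac12$. The starting point is the identity
\[
\Lambda(\f, u/d, s) = (2\pi)^{1/4-\ell/2}\, L_\infty(s)\, L_\f(s, u/d),
\]
so that the analytic behaviour of $L_\f(s,u/d)$ is governed by the integral $\int_0^\infty \f(\ic y + u/d)\,y^{s+\ell/2-1/4}\,\tfrac{\d y}{y}$. First I would establish entirety. Since $\f$ is a cusp form it decays exponentially as $y\to\infty$, and because $u/d$ is a cusp of $\Gamma_0(4)$ and $\f$ is cuspidal, $\f(\ic y + u/d)\to 0$ (in fact like $\exp(-c/y)$) as $y\to 0^+$; hence the integral converges absolutely for every $s\in\C$ and defines an entire function. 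As $L_\infty(s)$ is zero-free, dividing shows $L_\f(s,u/d)$ is entire.

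For the functional equation I would split the integral at the self-dual scale $y=q_d^{-1}$ and transform the small-$y$ piece using automorphy. The cusp $u/d$ (with $(u,d)=1$) is $\Gamma_0(4)$-equivalent to $\infty$, $-\tfrac12$, or $0$ precisely according as $4\mid d$, $2\,\|\,d$, or $2\nmid d$; this is the source of the three rows of \eqref{lambdafd}. Choosing $\gamma\in\Gamma_0(4)$ carrying $u/d$ to the relevant standard cusp and invoking $\f|_{[\gamma]}=\f$, the value $\f(\ic y + u/d)$ is rewritten through the Fourier expansion of $\f$ at that cusp, i.e.\ in terms of $\f$, $\g$, or $\fh$ of \eqref{ghdef}. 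The half-integral automorphy factor $j(\gamma,z)=\vep_d^{-1}\big(\tfrac{c}d\big)(cz+d)^{1/2}$ and the $\varphi$-parts of the $\rho_\fa$ then produce the multipliers $\vep_v^{2\ell+1}\big(\tfrac{d}v\big)$, respectively $\ic^{\ell+1/2}\vep_d^{-(2\ell+1)}\big(\tfrac{v}d\big)$, recorded in $\varpi_d(n,v)$, where $v$ (with $uv\equiv 1\mods{d}$) enters through the complementary entries of the transforming matrix. Finally an involution of the form $y\mapsto (q_d^2 y)^{-1}$ swaps $(0,q_d^{-1})$ with $(q_d^{-1},\infty)$ and interchanges $y^{s+\ell/2-1/4}$ with $y^{(1-s)+\ell/2-1/4}$, yielding the reflection $s\mapsto 1-s$, the ratio $q_d^{1-s}/q_d^{s}$, and the leading constant $\ic^{-(\ell+1/2)}$.

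The cases $4\mid d$ and $2\nmid d$ may be quoted essentially verbatim from \cite[Lemma 4.3]{HKKL2012}. The new input is the case $2\,\|\,d$, where the relevant cusp is $-\tfrac12$. Here one must \emph{not} feed $\f|_{[\rho_{-1/2}]}$ directly into the machine: by \eqref{gperiod} this function satisfies $\f|_{[\rho_{-1/2}]}(z+1)=\ic^{2\ell+1}\f|_{[\rho_{-1/2}]}(z)$, so it is not $1$-periodic and its frequencies are shifted by $(2\ell+1)/4$—exactly the point overlooked in \cite{HKKL2012}. Replacing it by $\g(z)=2^{\ell+1/2}\f|_{[\rho_{-1/2}]}(4z)$, which by \eqref{g-fourier} is genuinely $1$-periodic with integral frequencies $\equiv(-1)^\ell\,(\bmod\,4)$, restores the Hecke machinery. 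The dilation by $4$ is precisely what turns the argument $-\overline u/d$ into $-\overline u/(4d)$, accounting for the factor $\ee(-nv/(4d))$ and for the choice $q_d=2d$ in the $2\,\|\,d$ row. Collecting the three cases into the unified symbols $\lambda(n;d)$, $\varpi_d(n,v)$ of \eqref{lambdafd} and into $\widetilde L_\f$ of \eqref{Ltilde} gives \eqref{fe}.

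The main obstacle is the exact bookkeeping of the half-integral-weight multiplier system: keeping track of the interplay of $\vep_v^{2\ell+1}$, the reciprocating Jacobi symbols $\big(\tfrac{d}v\big)$ and $\big(\tfrac{v}d\big)$, the power $\ic^{\ell+1/2}$, and the Gauss-sum normalization issuing from $j(\gamma,z)$ and the $\varphi$-parts of the $\rho_\fa$. Pinning down the argument shift and the normalizing constant correctly in the $2\,\|\,d$ case is the delicate step, and is exactly where the amendment to \cite{HKKL2012} is required.
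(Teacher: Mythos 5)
Your proposal follows essentially the same route as the paper: the paper does not reprove the functional equation from scratch but quotes \cite[Lemma 4.3]{HKKL2012} (whose proof is exactly the Hecke--Mellin argument you sketch, splitting according to which of the three cusps $u/d$ is equivalent to) and then makes precisely the amendment you describe for $2\,\|\,d$, namely replacing the non-$1$-periodic $\f\vert_{[\rho_{-1/2}]}$ by $\g(z)=2^{\ell+1/2}\f\vert_{[\rho_{-1/2}]}(4z)$, which shifts the dual argument to $-\overline{u}/(4d)$ and forces $q_d=2d$. Your identification of the periodicity failure via \eqref{gperiod} as the point needing correction, and of the resulting changes to $\varpi_d$ and $q_d$, matches the paper exactly.
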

\begin{remark}\label{rmk1} For the case $2\| d$, the right-side of the equation \eqref{fe} is of period $d$ or probably its divisor in the parameter $v$, which is not obvious in view of the factor $\ee(-\frac{nv}{4d})$. 
Indeed, one checks that $\varpi_d(n,v+d) = \varpi_d(n,v)$ by using (i) if $d=2h$ where $h$ is odd, 
then $\left(\tfrac{d}v\right)= \left(\tfrac{-2}h\right)\left(\tfrac{h}v\right)$; (ii) $n\equiv (-1)^\ell\,(\bmod\,{4})$ in light of the support of $\{\lambda_\g(n)\}$. 
\end{remark}

Assume $\sigma<0$. Applying the functional equation \eqref{fe} to \eqref{Dr}, we obtain 
\begin{equation}\label{drneg}
D_r(s)
= \ic^{-(\ell+1/2)} r^{-2} \frac{L_\infty(1-s)}{L_\infty(s)} \sum_{d\mid r^2} q_d^{1-2s} 
\sum_{n\ge 1} \frac{\lambda(n; d)} {n^{1-s}} 
\sideset{}{^*}\sum_{u\,({\rm mod}\,d)} \varpi_d(n,v).
\end{equation}
With a change of running index into $v$ (as $uv\equiv 1\,(\bmod\,{d})$), 
we observe from \eqref{lambdafd} that the sum over $u$ mod $d$  is a  particular case of Kloosterman-Sali\'e sums, 
see  \cite[Section 3]{Iwaniec1987}. Immediately we have the Weil bound,
\begin{align}\label{weil}
\sideset{}{^*}\sum_{u\,({\rm mod}\,d)} \varpi_d(n,v)\ll d^{1/2}\tau(d)(d, n)^{1/2}.
\end{align}
But in fact it carries more arithmetic properties, as shown below.

\begin{lemma}\label{ksb}
 For $e\in\{0, 1, 2\}$, $b$ an odd squarefree integer and $(a, 2b)=1$, we have
\begin{equation} \label{charsum}
\sideset{}{^*}\sum_{v ({\rm mod}\,2^ea^2b)} \varpi_{2^e a^2b}(m,v) 
= G_{e,b}(m) a^2 \sum_{f\mid a^2}\frac{\mu(f)}{f}  {\mathbbm{1}_{a^2/f}(m)},
\end{equation}
where $G_{e,b}(m)\ll \sqrt{b}$ with an absolute $\ll$-constant, and $\mathbbm{1}_d(n)=1$ if $d\mid n$ and $0$ otherwise.
(Recall that we are confined to $m\equiv (-1)^\ell\,(\bmod\,{4})$ in the case of $e=1$.)
\end{lemma}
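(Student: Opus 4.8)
The plan is to exploit the twisted multiplicativity of the Salié-type sum $\sideset{}{^*}\sum_{v}\varpi_{2^ea^2b}(m,v)$ across the three pairwise coprime factors of the modulus $d=2^ea^2b$. Since $b$ is odd and $(a,2b)=1$, the moduli $2^e$, $a^2$ and $b$ are pairwise coprime, so I would first invoke the Chinese Remainder Theorem to write each $v\bmod d$ as a triple $(v_1,v_2,v_3)$ modulo $(2^e,a^2,b)$, with each $v_i$ running independently over the corresponding unit group. Splitting the additive character by partial fractions (so that $\ee(-mv/d)$, resp. $\ee(-\overline{4}mv/d)$, becomes a product of characters modulo $2^e$, $a^2$ and $b$) and splitting the Jacobi symbols multiplicatively via reciprocity, the full sum factors as a product of three sums, one attached to each of $2^e$, $a^2$ and $b$.

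The decisive simplification comes from the $a^2$-component. Because $a^2$ is a perfect square coprime to $2b$, the quadratic-character factor restricted to this component is trivial: $\big(\tfrac{a^2}{v}\big)=\big(\tfrac{v}{a^2}\big)=1$ whenever $(v,a)=1$, and the sign factor $\vep_v^{2\ell+1}$ depends only on $v\bmod 4$, i.e. on the $2^e$-component. Hence the $a^2$-sum reduces to a pure additive-character (Ramanujan) sum
\[
\sideset{}{^*}\sum_{v\,({\rm mod}\,a^2)} \ee\Big(\frac{c\,mv}{a^2}\Big)=c_{a^2}(cm),
\]
where $c$ is the relevant inverse of $4b$ (or of $b$) modulo $a^2$, in particular a unit. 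Since a Ramanujan sum depends only on the gcd of its arguments, $c_{a^2}(cm)=c_{a^2}(m)=\sum_{g\mid(a^2,m)} g\,\mu(a^2/g)$, and the substitution $g=a^2/f$ turns this into exactly $a^2\sum_{f\mid a^2}\tfrac{\mu(f)}{f}\,\mathbbm{1}_{a^2/f}(m)$, the claimed divisor-sum factor.

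It then remains to collect the $2^e$- and $b$-components into $G_{e,b}(m)$ and bound them. The $2^e$-sum has at most $2^e\le 4$ terms, each of modulus $1$, so it contributes $O(1)$. The $b$-sum, with $b$ odd and squarefree, is a genuine Gauss/Salié sum to modulus $b$; evaluating it through its Gauss-sum factorization, equivalently applying the Weil bound as in \eqref{weil}, gives a contribution $\ll\sqrt b$ with an absolute constant. Multiplying the two yields $G_{e,b}(m)\ll\sqrt b$, uniformly in $m$.

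The main obstacle is the bookkeeping in the factorization step. One must split the Jacobi symbols $\big(\tfrac{d}{v}\big)$ (or $\big(\tfrac{v}{d}\big)$ when $e=0$) consistently via quadratic reciprocity, keep track of the factors $\vep_v^{2\ell+1}$ and $\vep_d^{-(2\ell+1)}$ and of the inverse $\overline{4}\bmod d$, and handle the three cases $e=0,1,2$ separately, since they carry different character structures and prefactors (the support condition $m\equiv(-1)^\ell\,(\bmod\,4)$ being needed precisely when $e=1$). Once the cross-terms and reciprocity signs are shown to land entirely inside the harmless $2^e$- and $b$-factors, the clean Ramanujan sum for the $a^2$-part emerges as above, and the two bounds combine to give \eqref{charsum}.
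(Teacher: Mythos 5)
Your proposal follows essentially the same route as the paper: a CRT factorization of the Sali\'e-type sum in which the $a^2$-component reduces to a Ramanujan sum (because the Jacobi symbol is trivial on the square $a^2$ and the $\vep$-factor is insensitive to the odd square part), yielding the factor $a^2\sum_{f\mid a^2}\mu(f)f^{-1}\mathbbm{1}_{a^2/f}(m)$, while the leftover is a Gauss sum to modulus $b$ bounded by $\sqrt b$ via the primitivity of $\chi_{b'}$. The only caveat is that the paper splits two ways, choosing representatives $v=8b\alpha+a^2\beta$ with $\beta$ running mod $2^eb$, rather than three, precisely because for $e\le 1$ the factors $\vep_v^{2\ell+1}$ and $\ee\big(-mv/(2^{4-e}a^2b)\big)$ are not functions of $v$ modulo the $2^e$-component alone; this is exactly the bookkeeping you flag, and it is resolved by the support condition $m\equiv(-1)^\ell\pmod 4$ that you correctly identify as the key input when $e=1$.
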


Lemma~\ref{ksb}'s proof is postponed to Section~\ref{S6}. Now we apply \eqref{weil} to give a technically lightweight  improvement on the result \eqref{hulse} of  Hulse et al.

Let $\vep>0$ be small and $\sigma=-\vep$. Applying \eqref{weil} and Stirling's formula to  \eqref{drneg},  it follows that (recalling $q_d=d$ or $2d$)
\begin{align*}
D_r(-\vep +\ic\tau)
& \ll r^{-2} (1+|t|)^{1+\vep} \sum_{d\mid r^2} d^{3/2+3\vep}
\sum_{n\ge 1} |\lambda(n; d)|(d,n)^{1/2} n^{-(1+\vep)}
\\
& \ll (r(1+|t|))^{1+\vep}
\end{align*}
because $|\lambda(n; d)|(n,d)^{1/2}\le |\lambda(n; d)|^2 +(n,d)$, implying that the last summation is 
\begin{eqnarray*}
\ll \sum_{n\ge 1} |\lambda(n; d)|^2n^{-(1+\vep)} + \sum_{\ell|d} \sum_{n\ge 1}  n^{-(1+\vep)}\ll d^\vep.
\end{eqnarray*}
By \cite[Lemma 4.2]{HKKL2012}, we have $D_r(1+\vep +\ic \tau) \ll r^{-2}$. An application of Phragm\'en–Lindel\"of principle  gives 
$$
D_r(\sigma+\ic\tau) \ll r^{1-3\sigma+\vep}(1+|\tau|)^\vep.
$$
To assure the convergence in \eqref{Mfsum}, we require $1-3\sigma <-1$ and hence conclude the following. 

\begin{proposition}\label{JL}  
$L_{\f}^{\flat}(\sigma+\ic\tau)
\ll_{{\mathfrak f}, \varepsilon} (|\tau|+1)^{1-\sigma+2\varepsilon}$  for $\tfrac{2}{3}+\varepsilon\le\sigma\le 1+\vep$
and $\tau\in \R$.
\end{proposition}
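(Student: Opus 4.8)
The plan is to read off Proposition~\ref{JL} from the decomposition \eqref{Mfsum}, which for $\sigma>1$ reads
\[
L_\f^\flat(s)=\sum_{r\ge 1}\mu(r)\,D_r(s),
\]
combined with the convexity bound for $D_r(s)$ obtained just above its statement. First I would note that each $L_\f(s,u/d)$ is entire by Lemma~\ref{FunctionalEquation}, and that $D_r(s)$ in \eqref{Dr} is a \emph{finite} linear combination of such functions: the number of admissible pairs $(d,u)$ with $d\mid r^2$ and $(u,d)=1$ equals $\sum_{d\mid r^2}\phi(d)=r^2$. Hence $D_r(s)$ is entire for every $r$, and the right-hand side of \eqref{Mfsum} is a series of holomorphic functions, so the entire task reduces to controlling its convergence.

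Next I would invoke the bound produced from the functional equation \eqref{fe}, the Weil-type estimate \eqref{weil}, and the Phragm\'en--Lindel\"of principle, in the form
\[
D_r(\sigma+\ic\tau)\ll r^{1-3\sigma+\vep}(1+|\tau|)^{1-\sigma+\vep}
\qquad(-\vep\le\sigma\le 1+\vep),
\]
the $\tau$-exponent arising from interpolating the growth $(1+|\tau|)^{1+\vep}$ at $\sigma=-\vep$ against the $\tau$-bounded behaviour at $\sigma=1+\vep$. The decisive point is that for $\sigma\ge\tfrac23+\vep$ the $r$-exponent satisfies $1-3\sigma+\vep\le-1-2\vep<-1$, so that $\sum_{r\ge 1}r^{1-3\sigma+\vep}$ converges, uniformly on the strip $\tfrac23+\vep\le\sigma\le 1+\vep$ once the $\tau$-factor is extracted. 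Consequently $\sum_r\mu(r)D_r(s)$ converges locally uniformly on $\re s>\tfrac23$.

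The conclusion is then immediate. By Weierstrass's theorem the locally uniform limit of the holomorphic partial sums is holomorphic on $\re s>\tfrac23$; it agrees with $L_\f^\flat(s)$ on $\re s>1$, and so provides the analytic continuation to that larger half-plane. Extracting the factor $(1+|\tau|)^{1-\sigma+\vep}$ from the $r$-sum and bounding the remaining convergent series by a constant depending only on $\vep$ yields
\[
L_\f^\flat(\sigma+\ic\tau)\ll_{\f,\vep}(1+|\tau|)^{1-\sigma+\vep}\sum_{r\ge 1}r^{1-3\sigma+\vep}\ll_{\f,\vep}(1+|\tau|)^{1-\sigma+2\vep},
\]
which is exactly the asserted estimate in the range $\tfrac23+\vep\le\sigma\le 1+\vep$.

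Granting the bound on $D_r(s)$, the proposition is thus a routine summation, and there is no serious analytic obstacle in this step itself. The genuine limitation is structural: the barrier $\sigma>\tfrac23$ is forced solely by the $r^{1-3\sigma}$ growth, i.e. by estimating $L_\f^\flat$ through $\sum_r|\mu(r)|\,|D_r|$ term by term. To push the continuation past $\tfrac23$ toward $\tfrac12$, as the main theorem requires, one cannot tolerate this triangle-inequality loss and must instead exploit cancellation among the summands $\mu(r)D_r(s)$---precisely the cancellation between exponential factors and real quadratic characters that the approximate functional equation method of the main argument is designed to harvest.
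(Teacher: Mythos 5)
Your proposal is correct and follows essentially the same route as the paper: bound $D_r$ at $\sigma=-\vep$ via the functional equation \eqref{fe}, the Weil bound \eqref{weil} and Stirling, combine with $D_r(1+\vep+\ic\tau)\ll r^{-2}$ by Phragm\'en--Lindel\"of, and then sum the resulting $r^{1-3\sigma+\vep}$ over $r$, which forces $\sigma>\tfrac23$. Your interpolated $\tau$-exponent $1-\sigma+\vep$ is in fact the one consistent with the stated proposition (the paper's displayed interpolation writes only $(1+|\tau|)^{\vep}$, apparently a slip), and your closing remark about needing cancellation among the $\mu(r)D_r(s)$ to reach $\sigma>\tfrac12$ matches the paper's motivation for the approximate functional equation argument.
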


\begin{remark}\label{proprmk} 
We have applied only the mean square estimate \eqref{msq} for $\g$ and $\fh$, and only the Hecke eigenform property of $\f$ is used.  In the next section, we will invoke the arithmetic property revealed in \eqref{charsum}, the  eigenform properties of all $\f,\g,\fh$ and the approximate functional equation to prove the main result.  
\end{remark}

\vskip 8mm

\section{Proof of Theorem \ref{main}}\label{S5}

We begin with the approximate functional equation for $L_\f(s,u/d)$ below, 
whose proof is given in Section~\ref{S7}. 

\begin{lemma}\label{AFE} 
Let $T\ge 1$ be any number and  $s=\sigma+\mathrm{i}\tau$.  
Suppose $\frac12\le \sigma \le \frac32$ and $|\tau|\le T$. We have
\begin{align*}
L_\f(s, u/d) 
& = \sum_{m\ge 1} \frac{ \lambda_\f(m) \ee(mu/d) }{m^s} V\bigg(\frac{m}{q_dT}\bigg) 
\\
& \quad + \ic^{-(\ell+1/2)}(q_dT)^{1-2s} \sum_{m\ge 1} \frac{ \lambda_{\f,d}(m) \varpi_d(m,v)}{m^{1-s}} V_{s,T}\bigg(\frac{m}{q_dT}\bigg)
\end{align*}
where $uv\equiv 1\,(\bmod\,{d})$, $V(y)$ and $V_{s,T}(y)$ are smooth functions on $(0,\infty)$ 
and satisfy the following: for any $0<\eta<\frac14$,
\begin{align*}
V(y)
& = 1+O_\eta(y^\eta), 
\\
V_{s,T}(y) 
& = \frac{L_\infty(1-s)}{T^{1-2s} L_\infty(s)} +O_\eta(y^\eta)\ll_\eta 1+y^\eta,
\end{align*}
and for any $\eta>0$, both $V(y)$ and $V_{s,T}(y)\ll_\eta y^{-\eta}$.
\end{lemma}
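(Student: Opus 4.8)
The plan is to obtain both sums as the output of a single Mellin--Barnes integral, the standard route to an approximate functional equation. Fix an even entire kernel $G(w)$ with $G(0)=1$ that decays rapidly in vertical strips, say $G(w)=\ee^{w^2}$, and for $c>0$ large enough that $\sigma+c>1$ set
\[
I(s):=\frac{1}{2\pi\ic}\int_{(c)}L_\f(s+w,u/d)\,\frac{G(w)}{w}\,(q_dT)^w\,\d w .
\]
On $\Re w=c$ the series \eqref{Lf} converges absolutely and $G$ decays, so I may interchange summation and integration to get
\[
I(s)=\sum_{m\ge1}\frac{\lambda_\f(m)\ee(mu/d)}{m^s}\,V\!\Big(\frac{m}{q_dT}\Big),\qquad V(y):=\frac{1}{2\pi\ic}\int_{(c)}\frac{G(w)}{w}\,y^{-w}\,\d w,
\]
which is precisely the first sum in the statement.

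Next I would shift the contour to $\Re w=-c$, crossing only the simple pole at $w=0$ (as $L_\f(s+w,u/d)$ is entire by Lemma~\ref{FunctionalEquation} and $G$ is entire) with residue $L_\f(s,u/d)$; hence $L_\f(s,u/d)=I(s)-\frac{1}{2\pi\ic}\int_{(-c)}(\cdots)\,\d w$, the integrand being the same as that of $I(s)$. On $\Re w=-c$ one has $\Re(s+w)<0$, so I substitute the functional equation \eqref{fe} for $L_\f(s+w,u/d)$ and expand \eqref{Ltilde}, which now converges. Collecting the powers through $q_d^{1-2(s+w)}(q_dT)^w n^{w}=(q_dT)^{1-2s}T^{2s-1}T^{2w}\big(n/(q_dT)\big)^w$ pulls out the prefactor $(q_dT)^{1-2s}$ and shows
\[
-\frac{1}{2\pi\ic}\int_{(-c)}(\cdots)\,\d w=\ic^{-(\ell+1/2)}(q_dT)^{1-2s}\sum_{n\ge1}\frac{\lambda(n;d)\,\varpi_d(n,v)}{n^{1-s}}\,V_{s,T}\!\Big(\frac{n}{q_dT}\Big),
\]
where $\lambda(n;d)=\lambda_{\f,d}(n)$ and $\varpi_d(n,v)$ are as in \eqref{lambdafd} and
\[
V_{s,T}(y):=-\,T^{2s-1}\,\frac{1}{2\pi\ic}\int_{(-c)}\frac{L_\infty(1-s-w)}{L_\infty(s+w)}\,\frac{G(w)}{w}\,T^{2w}y^{w}\,\d w ;
\]
this is exactly the second sum, the leading minus in $V_{s,T}$ absorbing the sign of the residue step.

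The stated properties of $V$ and $V_{s,T}$ then follow from further contour shifts. Moving the defining contour of $V$ to the left of $w=0$ yields the residue $G(0)=1$ plus an integral over $\Re w=-\eta$ of size $O_\eta(y^\eta)$, giving $V(y)=1+O_\eta(y^\eta)$, while moving it to the right gives $V(y)\ll_\eta y^{-\eta}$ for every $\eta>0$. For $V_{s,T}$, shifting the contour rightward across $w=0$ produces (with the outer minus) the main term $T^{2s-1}L_\infty(1-s)/L_\infty(s)=L_\infty(1-s)/(T^{1-2s}L_\infty(s))$ and a remainder $O_\eta(y^\eta)$ on $\Re w=\eta$; shifting leftward by any fixed amount gives the decay $\ll_\eta y^{-\eta}$, the Gamma ratio being controlled by Stirling and dominated by $G$. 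The restriction $\eta<\tfrac14$ is exactly the distance to the first pole of the numerator factor $\Gamma\big(1-s-w+\tfrac{\ell}2-\tfrac14\big)$, located at $\Re w=1-\sigma+\tfrac{\ell}2-\tfrac14\ge\tfrac14$ (attained at $\ell=2$, $\sigma=\tfrac32$); the bounds for $V$ are simply stated over the same range.

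I expect the main obstacle to be the analytic justification of the contour shift: one must check that the horizontal segments at height $\pm R$ vanish as $R\to\infty$ and that the integral on $\Re w=-c$ converges. This needs a polynomial-in-$|\Im w|$ growth bound for the entire function $L_\f(s+w,u/d)$ across the strip, obtained from \eqref{fe} together with Stirling's formula and the Phragm\'en--Lindel\"of principle, combined with the super-polynomial decay of $G(w)=\ee^{w^2}$. Securing the uniformity of this bound (and with it the legitimacy of interchanging summation and integration on both lines) is the delicate point; by contrast the bookkeeping of the $q_d$ and $T$ powers is routine.
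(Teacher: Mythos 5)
Your proposal is correct and follows essentially the same route as the paper: a Mellin--Barnes integral with an even, rapidly decaying kernel ($G(w)=\ee^{w^2}$ in place of the Harcos-type $H$), a contour shift picking up the residue $L_\f(s,u/d)$ at $w=0$, the functional equation of Lemma~\ref{FunctionalEquation} applied on the left-hand contour, and the properties of $V$ and $V_{s,T}$ obtained by further shifts together with the Stirling-type bound on the ratio of Gamma factors. The power bookkeeping and the identification of the $\eta<\tfrac14$ barrier with the nearest pole of the Gamma numerator both check out.
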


Now we deal with $L_\f^\flat(s)$. In \eqref{Mfsum}, we replace the even squarefree $r$ by  $2r$ and thus 
$$
L_\f^\flat(s)
= \sum_{\substack{r\ge 1\\ {\rm odd}}} \mu(r) D_r(s)  
- \frac14 \sum_{\substack{r\ge 1\\ {\rm odd}}} \mu(r) D_{2r}(s).
$$
Next  we separate the sum over $d$  according as $4\mid d$, $2\,\|\,d$ or $2\nmid d$, 
and hence obtain a decomposition of $L_\f^\flat(s)$ into  three pieces,
$$
L_\f^\flat(s) = M_\infty(\f, s) + M_{1/2}(\f, s) + M_0(\f, s),
$$
where 
\begin{align*}
M_\infty(\f, s)
& := -\frac14\sum_{\substack{r=1\\ {\rm odd}}}^\infty  \frac{\mu(r)}{r^2}  \sum_{d\mid r^2} 
\sideset{}{^*}\sum_{u ({\rm mod}\,4d)} L_\f( s, u/4d),
\\
M_{1/2}(\f, s)
& := -\frac14 \sum_{\substack{r=1\\ {\rm odd}}}^\infty  \frac{\mu(r)}{r^2} \sum_{d\mid r^2} 
\sideset{}{^*}\sum_{u ({\rm mod}\,2d)} L_\f(s, u/2d),
\\
M_{0}(\f, s)
& := \frac34 \sum_{\substack{r=1\\ {\rm odd}}}^\infty \frac{\mu(r)}{r^2}  \sum_{d\mid r^2} 
\sideset{}{^*}\sum_{u ({\rm mod}\,d)} L_\f(s, u/d).
\end{align*}
We shall verify the uniform convergence for the three series of holomorphic functions in $\re s>\frac12$, and concurrently obtain  the desired upper estimate (\ref{UBMs}).

Let $\sigma_0=\frac12+\vep_0$ where $\vep_0>0$ is arbitrarily small but fixed,  and $T\ge 1$ be any integer. Consider $s=\sigma+\ic\tau$ where $\sigma_0\le \sigma\le \sigma_0+\frac12$ and $T-1\le |\tau|\le T$. 
In view of the condition $d\mid r^2$ for squarefree $r$, we decompose into $d=a^2b$ and $r=abc$ where $a,b,c$ are pairwise coprime and squarefree. It is equivalent to consider the series
\begin{eqnarray*}
 \sum_{a,b,c }  \frac{\mu(2abc)}{(abc)^2} 
\sideset{}{^*}\sum_{u ({\rm mod}\,2^e a^2b)} L_\f\Big(s, \frac{u}{2^e a^2b}\Big)
\end{eqnarray*}
where $e=0,1,2$. 
Now we apply Lemma~\ref{AFE} and observe, as before, 
the set of $v$ given by $uv\equiv 1\,(\bmod\,{d})$ runs through a reduced residue class as $u$ varies. We are led to 
\begin{align}
\Sigma_1 
& = \sum_{a,b,c}  \frac{\mu(2abc)}{(abc)^2} \sum_{m\ge 1} \frac{\lambda_\f(m)}{m^s} V\bigg(\frac{m}{a^2bT_e}\bigg) \sideset{}{^*}\sum_{u\,({\rm mod}\,2^ea^2b)} \ee\bigg(\frac{mu}{2^ea^2b}\bigg), 
\label{sigma1}
\\
\Sigma_2   
& = T_e^{1-2s} \sum_{a, b, c} \frac{\mu(2abc)}{a^{4s}b^{1+2s}c^2}   
\sum_{m\ge 1} \frac{\lambda_{\f,e}(m)}{m^{1-s}} V_{s,T}\bigg(\frac{m}{a^2bT_e}\bigg)  
\sideset{}{^*}\sum_{v ({\rm mod}\,2^ea^2b)} \varpi_{2^e a^2b}(m,v),
\label{sigma2}
\end{align}
where $\lambda_{\f,e}(m):=\lambda_{\f, 2^e}(m)$, see \eqref{lambdafd}, and $T_e= 2T$ or $4T$ according as $e=0$ or not (so that $q_{2^ea^2b}T= a^2bT_e$, see \eqref{qd}).

Inserting (\ref{charsum}) into $\Sigma_2$ in (\ref{sigma2}), 
we further decompose $a=fg$ and $m=fg^2h$ in light of the squarefreeness of $f$ 
and the conditions $f\mid a^2$ and $(a^2/f)\mid m$. 
\begin{equation}\label{sigma2a}
\Sigma_2
= T_e^{1-2s} \sum_{f,g,b,c }  \frac{\mu(f)\mu(2fgbc)}{f^{3s}g^{2s}b^{1+2s}c^2}   
\sum_{h\ge 1} \frac{ \lambda_{\f,e}(fhg^2)}{h^{1-s}} V_{s,T}\bigg(\frac{h}{fbT_e}\bigg)G_{e,b}(fhg^2).
\end{equation}
To justify the uniform convergence, it suffices to consider the sum over dyadic ranges: 
$(f,g,b,c)\sim (F,G,B,C)$, meaning $F\le f<2F$, etc. 
Denote by $\Sigma_2^{F,G,B,C}$ the expression on the right-side of (\ref{sigma2a}) under this range restriction. 
We estimate each summand trivially with the bound $G_{e,b}(m)\ll \sqrt{b}$ in Lemma~\ref{ksb}. 
A little simplification leads to
\begin{equation}\label{sigma2a1}
\Sigma_2^{F,G,B,C}
\ll T_e^{1-2\sigma} \sum_{\substack{(f,g, b,c)\\ \sim (F,G,B,C)}}  
\frac{|\mu(2fgbc)|}{f^{3\sigma}g^{2\sigma} b^{1/2+2\sigma}c^2}   
\sum_{h\ge 1} \frac{|\lambda_{\f,e}(fhg^2)|}{h^{1-\sigma}} \left|V_{s,T}\bigg(\frac{h}{fbT_e}\bigg)\right|.
\end{equation}

Next we treat the sum over $h$ in order for the following estimate\footnote{\mbox{}  Throughout the proof, $\vep$ denotes an arbitrarily small positive number whose value may differ, up to our disposal,  at each occurrence.}:
\begin{equation}\label{hsum}
\sum_{h\ge 1} \frac{ |\lambda_{\f,e}(fhg^2)|}{h^{1-\sigma}} \left|V_{s,T}\bigg(\frac{h}{fbT_e}\bigg)\right|
\ll G^\vep (TFB)^{\sigma-1/2+\vep} \sum_{h\le (FBT)^{1+\vep}} \frac{ |\lambda_{\f,e}(fh)|}{\sqrt{h}}\cdot
\end{equation}
To establish \eqref{hsum}, we invoke Lemmas~\ref{lem-eigen} and \ref{lsqfree}, to remove $g$ inside $\lambda_{\f,e}(fhg^2)$, and the estimate for $V_{s,T}$. Set $Q$ to be the set of all primes {\it not} dividing $g$, and write $h=qr^2$ where $p^2|q$ implies $p\in \mathcal{Q}$ and $p|r$ implies $p\notin \mathcal{Q}$ (see Remark~\ref{rmk2}). As $(2f,g)=1$, $Q$ contains $2$ and all the prime factors of $f$. Thus $p^2|fq$ implies $p\in \mathcal{Q}$. Thus, $ |\lambda_{\f,e}(fhg^2)|=|\lambda_{\f,e}(fq (gr)^2)|\ll_\vep  |\lambda_{\f,e}(fq)|(gr)^\vep$. 
From Lemma~\ref{AFE}, we deduce the estimate
$$
V_{s,T}\bigg(\frac{h}{fbT_e}\bigg)\ll_\vep \left\{\begin{array}{ll} 
(FBT)^\vep & \mbox{ for $h\le (FBT)^{1+\vep}$,}\vspace{2mm}\\
 h^{-2} & \mbox{ otherwise.}
\end{array}
\right.
$$
The sum over $h\ge (FBT)^{1+\vep}$ is negligible, in fact $\ll (TFGB)^\vep$ (for which we may use the crude bound  $|\lambda_{\f,e}(fq)|\ll (fq)^{1/2}$ by \eqref{msq}). Consequently, the left side of \eqref{hsum} is 
\begin{align*}
& \ll (FBT)^\vep \sum_{qr^2 \le (FBT)^{1+\vep}} g^\vep r^{2(\sigma-1)+\vep} 
|\lambda_{\f,e}(fq)| q^{-(1-\sigma)}
\\
& \ll G^\vep (FBT)^{\sigma-1/2+\vep} \sum_{q \le(FBT)^{1+\vep}} |\lambda_{\f,e}(fq)| q^{-1/2}
\end{align*}
(recalling $\sigma\ge \sigma_0>1/2$) which is \eqref{hsum} after renaming $q$ into $h$.

Inserting \eqref{hsum} into \eqref{sigma2a1}, we deduce that 
$$
\Sigma_2^{F,G,K,L,C}
\ll (TFGB)^\vep
T^{-\sigma+1/2} F^{-2\sigma} G^{ -2\sigma+1} B^{ -\sigma } C^{-1}
\sum_{f\sim F} \sum_{h\le (FBT)^{1+\vep}} |\lambda_{\f,e}(fh)| (fh)^{-1/2}.
$$
Write $m=fg$ and note the divisor function $\tau(m)\ll_\vep m^\vep$. The  double sum is 
\begin{align*}
& \ll_\vep (F^2BT)^\vep \sum_{m\ll (F^2BT)^{1+\vep}} |\lambda_{\f,e}(m)| m^{-1/2} 
\ll_\vep (F^2BT)^{1/2+\vep},
\end{align*}
by \eqref{msq}. In summary, we get
$$
\Sigma_2^{F,G,K,L,C}
\ll_\vep  
(TFGB)^\vep
T^{1-\sigma} F^{1-2\sigma}G^{1-2\sigma} B^{ -\sigma+1/2} C^{-1}.
$$
Recall $\sigma_0=\frac12+\vep_0$ and take $\vep\le \vep_0/2$. 
Consequently, uniformly for $\sigma_0\le \sigma\le \sigma_0+\frac12$  and $T-1\le |\tau|\le T$, 
we have $\Sigma_2^{F,G,K,L,C}\to 0$ as $\max(F,G,B,C)\to \infty$, concluding the uniform convergence. Moreover, as $T^{1-\sigma} \ll (1+|s|)^{1-\sigma}$, it follows that
\begin{eqnarray*}
\Sigma_2 
\ll \sum_{F,G,B,C} \Sigma_2^{F,G,K,L,C}
\ll_\vep (1+|s|)^{1-\sigma+\vep_0},
\end{eqnarray*}
recalling the multiple summations range over powers of two. 

We turn to $\Sigma_1$ in (\ref{sigma1}) which is plainly treated in the same fashion and indeed easier. The inner exponential sum in (\ref{sigma1}) equals
$$
2^ea^2b \sum_{\delta\mid 2^ea^2b} \frac{\mu(\delta)}{\delta} \mathbbm{1}_{2^ea^2b/\delta}(m).
$$
Noting that $a,b$ are squarefree and $(a,b)=1$, we write $\delta=2^j fk$, $a=fg$ and $b=kl$ where $j=0$ or $1$. Then the summation over $m$ will be confined to run over the sequence of $m=2^{e-j} fg^2l h$ for positive integers $h$. Explicitly we have
\begin{equation}\label{sigma1a}
\Sigma_1 
= \sum_{j=0,1} \sum_{f,g,k,l,c }  \frac{2^{(e-j)(1-s)}\mu(2^j fk)\mu(2fgklc)}{f^{1+s}g^{2s}k^2 l^{1+s}c^2}   
\sum_{h\ge 1} \frac{ \lambda_{\f}(2^{e-j}flhg^2)}{h^s} V\bigg(\frac{h}{2^j fkT}\bigg).
\end{equation}
Analogously we divide the summation ranges into dyadic intervals and consider the subsum of $\sum_{f,g,k,l,c}$ with $(f,g,k,l,c)\sim (F,G, K, L, C)$. Repeating the above argument\footnote{\mbox{}  For the calculation as in \eqref{hsum}, there is a little variant since the exponent $\sigma$ of $|h^s|$ is $>\frac{1}{2}$.},  correspondingly we obtain
\begin{align*}
& \Sigma_1^{F,G,K,L,C}
\\
& \ll_\vep (TFGK)^\vep
F^{-\sigma-1} G^{ 1-2\sigma} K^{ -1}L^{-\sigma-1} C^{-1}  \max_{j=0,1,2}\sum_{(f,l) \sim (F,L)}  
\sum_{h\ll (FKT)^{1+\vep}} |\lambda_\f(2^jflh)| h^{-\sigma}
\\
& \ll_\vep (TFGKL)^\vep
F^{-1} G^{1-2\sigma} K^{ -1}L^{-1} C^{-1}
\sum_{m\ll (F^2KLT)^{1+\vep}} |\lambda_\f(m)| m^{-\sigma}
 \\
& \ll_\vep (TFGKL)^\vep
T^{1-\sigma}F^{1-2\sigma} G^{1-2\sigma} K^{ -\sigma}L^{-\sigma} C^{-1},
\end{align*}
which assures the uniform convergence and the upper estimate. Our proof is complete by changing $\vep_0$ into $2\vep$.

\vskip 8mm

\section{Proof of Lemma~\ref{ksb}}\label{S6}

First consider $e=1$ or $2$, and take the complex conjugate of the left side  to simplify a bit the exponential factor. Then by \eqref{lambdafd},
\begin{eqnarray}\label{sumv}
\sideset{}{^*}\sum_{v ({\rm mod}\,2^ea^2b)} \overline{ \varpi_{2^ea^2b}(m,v)}
= \sideset{}{^*}\sum_{v ({\rm mod}\,2^ea^2b)} \vep_v^{-(2\ell+1)} \bigg(\frac{2^ea^2b}{v}\bigg) 
\ee\bigg(\frac{mv}{2^{4-e}a^2b}\bigg).
\end{eqnarray}
We write $v=\alpha 8b + \beta a^2$. Note that $v$ runs over a reduced residue class mod $2^ea^2b$ when $\alpha$ (mod $a^2$) and $\beta$ (mod $2^eb$) run over the respective reduced residue classes, since $a$ is odd and $(a,2b)=1$. Our substitution choice implies $v\equiv \beta$ (mod $4$) and thus $\vep_v=\vep_\beta$. Moreover, the extended Jacobi symbol may be written as, cf. \cite[p.442 (ii)-(iv)]{Shimura1973},
$$
\bigg(\frac{2^ea^2b}{v}\bigg)
= \bigg(\frac{2^eb}{\beta}\bigg)
= \bigg(\frac{(-1)^{(b-1)/2}2^e}{\beta}\bigg) \bigg(\frac{(-1)^{(b-1)/2}b}{\beta}\bigg)
=  \psi_{2,b}(\beta)\chi_{b'}(\beta),
\quad
(\mbox{say}),
$$
where $b' = (-1)^{(b-1)/2} b$ (is a quadratic discriminant) and $\chi_{b'}(\cdot)$ is the primitive quadratic character of  conductor $b$. (Note  $b$ is odd squarefree.)
Thus, we express the right side of \eqref{sumv} as
$$
G_{e,b}'(m) \sideset{}{^*}\sum_{\alpha\,({\rm mod}\,a^2)} \ee\bigg(\frac{2^{e-1} m\alpha}{a^2}\bigg)
= G_{e,b}'(m) a^2 \sum_{f\mid a^2} \frac{\mu(f)}{f} \mathbbm{1}_{a^2/f}(m)
$$
(cf. \cite[p.44 (3.2)]{IK2004} and recalling $a$ is odd)  where 
\begin{equation}\label{Gsum}
G_{e,b}'(m)
= \sideset{}{^*}\sum_{\beta \,({\rm mod}\,2^eb)} \vep_\beta^{-(2\ell +1)} \psi_{2,b}(\beta)\chi_{b'}(\beta)
\ee\bigg(\frac{m\beta}{2^{4-e}b}\bigg).
\end{equation}
This gives \eqref{charsum} with $G_{e,b}(m)=\overline{G_{e,b}'(m)}$, 
and thus it remains to show $G_{e,b}'(m)\ll \sqrt{b}$ so as to finish the proof.

We separate the sum in (\ref{Gsum}) into two subsums,  whose summands take the same value of $\vep_\beta$, as follows: 
$$
\sideset{}{^*}\sum_{\substack{\beta \,({\rm mod}\,2^eb)\\ \beta\equiv 1 ({\rm mod}\,4)}} 
+ \, \ic^{-(2\ell+1)} 
\sideset{}{^*}\sum_{\substack{\beta \,({\rm mod}\,2^eb)\\ \beta\equiv -1 ({\rm mod}\,4)}}.
$$
With the primitive character $\chi_4$ mod $4$ (given by $\chi_4(n)=(-1)^{(n-1)/2}$ for odd $n$), 
we relax the extra conditions with the factors $\frac12(1+\chi_4(\beta))$ and $\frac12(1-\chi_4(\beta))$. 
Consequently, letting $\theta_{\pm} = \frac12(\ic^{\ell+1/2}\pm {\ic}^{-(\ell+1/2)})$, we rearrange the terms to have
\begin{equation*}
G_{e,b}'(m)  
= \theta_{+} \sideset{}{^*}\sum_{\beta \,({\rm mod}\,2^eb)} 
\psi_{2,b}(\beta)\chi_{b'}(\beta)\ee\bigg(\frac{m\beta}{2^{4-e}b}\bigg)
+ \theta_{-} \sideset{}{^*}\sum_{\beta \,({\rm mod}\,2^eb)} \psi_{2,b}'(\beta)\chi_{b'}(\beta)
\ee\bigg(\frac{m\beta}{2^{4-e}b}\bigg)
\end{equation*}
where $\psi_{2,b}'(\beta)=\chi_4 \psi_{2,b}$. 
Both $\psi_{2,b}$ and $\psi_{2,b}'$ are characters (not necessarily primitive) modulo $8$. 
Repeating the argument of writing $\beta= 8\beta_1+b\beta_2$, we infer that 
\begin{equation}\label{Gbound}
G_{e,b}'(m) 
\ll \bigg|\mathop{{\sum}^*}_{\beta \,({\rm mod}\,b)} \chi_{b'}(\beta)\ee\bigg(\frac{2^{e-1}m\beta}{b}\bigg)\bigg|
\ll b^{1/2}
\end{equation}
by the primitivity of $\chi_{b'}$, see \cite[p.47 (3.12) and p.48 (3.14)]{IK2004}. 

Next we come to the case $e=0$. 
In this case, we set $v= b\alpha +a^2\beta$ with $\alpha\,(\bmod\,{a^2})$, $(\alpha, a)=1$ and $\beta\,(\bmod\,{b})$, 
$(\beta,b)=1$, then   
$$
\mathop{{\sum}^*}_{v ({\rm mod}\,a^2b)}  \bigg(\frac{v}b\bigg) \ee\bigg(\frac{-\overline{4}mv}{a^2b}\bigg)
= a^2 \sum_{f\mid a^2}\frac{\mu(f)}{f} \mathbbm{1}_{a^2/f}(m) 
\sideset{}{^*}\sum_{\beta ({\rm mod}\,b)} \chi_{b'}(\beta) \ee\!\left(-\frac{\overline{4} m\beta}{b}\right).
$$
Take $G_{0,b}(m)$ to be the product of $  \ic^{\ell+1/2} \vep_d^{-(2\ell+1)}$ and the character sum (over $\beta$). This gives, with \eqref{Gbound}, the desired result in \eqref{charsum}, completing the proof.

\vskip 8mm

\section{Proof of Lemma~\ref{AFE}}\label{S7}

Let $H(z)$ be an entire function such that $H(z)\ll_{\eta,A} (1+|z|)^{-A}$ for $\re z=\eta$ and any $A>0$, $H(0)=1$ and $H(z)=H(-z)$. (See \cite{Ha2002} for its construction.) We infer with the residue theorem that 
\begin{align*}
L_\f(s, u/d)
& = \frac1{2\pi \ic} \bigg\{\int_{(2)} - \int_{(-2)}\bigg\} L_\f(s+z, u/d) (q_dT)^z H(z)\frac{\dd z}{z}
=: I_1+I_2, 
\qquad
\mbox{(say)}. 
\end{align*}
Changing $z$ to $-z$ and invoking the functional  equation, we transform $I_2$ into
\begin{eqnarray*}
 \ic^{-(\ell+1/2)}  {(q_dT)^{1-2s}}\frac1{2\pi \ic} \int_{(2)} 
 \widetilde{L}_\f(1-s+z, v/d) \frac{ L_\infty(1-s+z)}{T^{1-2s+2z}L_\infty(s-z)} (q_dT)^{z} H(z)\frac{\dd z}{z}\cdot
\end{eqnarray*}

Set 
\begin{align*}
V(y)
& := \frac1{2\pi \ic} \int_{(2)} y^{-z}H(z)\,\frac{\dd z}{z},
\\
V_{s,T}(y)
& := \frac1{2\pi \ic} \int_{(2)} \frac{ L_\infty(1-s+z)}{T^{1-2s+2z}L_\infty(s-z)} y^{-z}H(z)\,\frac{\dd z}{z}\cdot
\end{align*}
The required formula follows readily after inserting the Dirichlet series of $L_\f(s,u/d)$ and $\widetilde{L}_\f(s,v/d)$ in \eqref{Lf} and \eqref{Ltilde}. 

It remains to check the properties of $V(y)$ and $V_{s,T}(y)$. 
The case of $V(y)$ is quite obvious, and for $V_{s,T}(y)$, we recall the estimate in \cite[Lemma 3.2]{Ha2002}: For $\alpha >-\sigma$, 
$$
\frac{\Gamma(z+\sigma)}{\Gamma(z)}\ll_{\alpha,\sigma} |z+\sigma|^\sigma
\qquad 
(\re z\ge \alpha).
$$
Recalling \eqref{gamma}, this yields
\begin{equation}\label{Lestimate}
\frac{ L_\infty(1-s+z)}{T^{1-2s+2z}L_\infty(s-z)}
\ll_\eta \bigg|\frac{1-s+z+\frac{\ell}2-\frac14}T\bigg|^{1-2\sigma+2\eta}
\ll \left(1+\frac{|z|}T\right)^{1-2\sigma+2\eta}.
\end{equation}
We shift the line of integration to the right, yielding $V_{s,T}(y)\ll_\eta  y^{-\eta}$ for any $\eta >0$ and shift to the left to derive
$$
V_{s,T}(y) = \frac{L_\infty(1-s)}{T^{1-2s} L_\infty(s)} +O_\eta(y^{\eta})
$$
for any  $0< \eta <\frac14$. The main term is $O(1)$ by (\ref{Lestimate}). The proof of Lemma~\ref{AFE} ends. 

\vskip 3mm

{\bf Acknowledgments}.
Lau is supported by GRF 17302514 of  the Research Grants Council of Hong Kong.
L\"u is supported in part by the key project of the National Natural Science Foundation of China (11531008) and IRT1264.
The preliminary form of this paper was finished during the visit of E. Royer and J. Wu at the University of Hong Kong in 2015. 
They would like to thank the department of mathematics for hospitality and excellent working conditions.

\vskip 10mm


\begin{thebibliography}{CC}

\bibitem{ConreyIwaniec2000} 
J. B. Conrey \& H. Iwaniec, 
\textit{The cubic moment of central values of automorphic $L$-functions}, 
Ann. Math. {\bf 151} (2000), 1175--1216.

\bibitem{Ha2002}
G. Harcos, 
\textit{Uniform approximate functional equation for principal $L$-functions}, 
IMRN {\bf 18} (2002), 923--932.

\bibitem{HKKL2012} 
T. A. Hulse, E. M. Kiral, C. I. Kuan \& L.-M. Lim, 
\textit{The sign of Fourier coefficients of half-integral weight cusp forms}, 
Int. J. Number Theory {\bf 8} (2012),  749--762.

\bibitem{Iwaniec1987} 
H. Iwaniec, 
\textit{Fourier coefficients of modular forms of half-integral weight}, 
Invent. Math. {\bf 87} (1987), 385--401.

\bibitem{Iwaniec1997} 
H. Iwaniec, 
\emph{Topics in classical automorphic forms}. 
Graduate Studies in Mathematics, 17. American Mathematical Society, Providence, RI, 1997. 

\bibitem{ILS2000} 
H. Iwaniec, W. Luo \& P. Sarnak,
\textit{Low lying zeros of families of $L$-functions}. 
Inst. Hautes Études Sci. Publ. Math.  {\bf 91}  (2000), 55--131. 

\bibitem{IK2004} 
H. Iwaniec \& E. Kowalski,
\emph{Analytic number theory}. 
American Mathematical Society Colloquium Publications, 53. American Mathematical Society, Providence, RI, 2004.

\bibitem{Kohnen1980} 
W. Kohnen, 
\textit{Modular forms of half-integral weight on $\Gamma_0(4)$}, 
Math. Ann. {\bf 248} (1980), 249--266.

\bibitem{Kohnen1982} 
W. Kohnen, 
\textit{Newforms of half-integral weight}, 
J. Reine Angew. Math. {\bf 333} (1982), 32--72.

\bibitem{LauRoyerWu2014}
Y.-K. Lau, E. Royer \& J. Wu,
\textit{Sign of Fourier coefficients of modular forms of half integral weight},
Mathematika, to appear.

\bibitem{MR}
K. Matom\"aki \& M. Radziwill, 
\textit{Multiplicative functions in short intervals},
Ann. of Math., to appear.

\bibitem{Niwa1977} 
S. Niwa, 
\textit{On Shimura's trace formula},
Nagoya Math. J.  {\bf 66}  (1977), 183--202. 

\bibitem{Ono2004}
K. Ono, 
\emph{The web of modularity: arithmetic of the coefficients of modular forms and {$q$}-series}, 
CBMS Regional Conference Series in Mathematics, vol. {\bf 102}, 
American Mathematical Society, Providence, RI, 2004.

\bibitem{Purkait2013} 
S. Purkait, 
\textit{On Shimura's decomposition},
Int. J. Number Theory  {\bf 9}  (2013),  1431--1445. 

\bibitem{Shimura1973} 
G. Shimura, 
\textit{On modular forms of half-integral weight}, 
Ann. Math. {\bf 97} (1973), 440--481.

\end{thebibliography}
\end{document}